\spnewtheorem{ownclaim}{Claim}{\itshape}{}
\newcommand{\llbracket}{([}
\newcommand{\rrbracket}{])}
\newcommand*{\msl}{\{\mskip-6mu\{}
\newcommand*{\msr}{\}\mskip-6mu\}}
\newcommand{\AC}{\mathsf{AC}}
\newcommand{\NC}{\mathsf{NC}}
\newcommand{\LS}{\mathsf{L}}
\newcommand{\Sym}{\mathsf{Sym}}
\newcommand{\fpf}{\mathsf{fpf}}
\newcommand{\ct}{\mathsf{ct}}
\newcommand{\pe}{\mathsf{pe}}
\DeclareMathOperator{\ord}{ord}
\DeclareMathOperator{\lcm}{lcm}
\begin{document}

\title{Finding cycle types in permutation groups with few generators}
\author{Markus Lohrey \and Andreas Rosowski}
\institute{Universit\"at Siegen, Department ETI \\ \email{$\{$lohrey,rosowski$\}$@eti.uni-siegen.de}}

\maketitle 

\begin{abstract}
The problem whether a given permutation group contains a permutation with a given cycle type is studied.
This problem is known to be \textsf{NP}-complete. In this paper it is shown that the problem can be solved
in logspace for a cyclic permutation group and that it is \textsf{NP}-complete for a 2-generated abelian permutation group.
In addition it is shown that it is \textsf{NP}-complete whether a 2-generated abelian permutation group contains
a fixpoint-free permutation.
\end{abstract}

\section{Introduction}

Permutations are ubiquitous objects in combinatorics \cite{Bona22} and group theory \cite{Cam10}. The set of all permutations on a set $\Omega$ forms
a group $\Sym(\Omega)$ (the \emph{symmetric group} on $\Omega$) under composition. A subgroup of a symmetric group
is called a \emph{permutation group}.  Cayley's famous 
theorem states that every group is isomorphic to a permutation group via the right regular representation.
Here, we only deal with the case that $\Omega$ is finite and write $\Sym(n)$ for $\Sym(\Omega)$ if $|\Omega|=n$.

Having group elements represented as permutations can be often  exploited algorithmically. For instance, the subgroup
membership problem for symmetric groups (Does a given permutation $\pi \in \Sym(n)$ belong to the subgroup generated by given permutations
$\pi_1, \ldots, \pi_k \in \Sym(n)$?) can be solved in polynomial time \cite{FurstHL80,Sims70} and even in $\NC$ \cite{BaLuSe87}. Another problem that has an extremely 
simple algorithm in symmetric groups is the conjugacy problem: given permutations $\pi, \rho \in \Sym(n)$, 
does there exist $\tau \in \Sym(n)$ such that $\pi = \tau^{-1} \rho \tau$? This is equivalent to say that
$\pi$ and $\rho$ have the same  \emph{cycle type}. The cycle type of a permutation $\pi \in \Sym(n)$ specifies for every
$\ell \leq n$ the number of cycles of length $\ell$ when $\pi$ is written (uniquely) as a product of pairwise disjoint cycles.

In this paper we are interested in the problem whether a given permutation group $G \leq \Sym(n)$ (specified by a list of generators) 
contains a permutation of a given cycle type. Or equivalently: does $G$ contain an element that is conjugated to a given
permutation $\pi$? We call this problem {\sf CycleType}.

Cameron and Wu showed in \cite{CameronW10} that {\sf CycleType} is {\sf NP}-complete. Moreover, {\sf NP}-hardness already holds for 
the case where $G$ is an elementary abelian 2-group (i.e., an abelian group where every non-identity element has order two).
Here we further pinpoint the borderline between tractability and non-tractability: We show that if the input permutation group 
$G$ is cyclic and given by a single
generator then {\sf CycleType} can be solved in logarithmic space on a deterministic Turing machine (and hence belongs to the 
complexity class {\sf P}). On the other hand, we show that  {\sf CycleType} is already  {\sf NP}-complete for the case where $G$
is generated by two commuting permutations, i.e., $G = \langle \pi, \tau \rangle$ with $\pi \tau = \tau \pi$.
Moreover, our proof shows that it is already {\sf NP}-complete whether for two given commuting permutations $\pi$ and $\tau$
the coset $\pi \langle \tau \rangle$ (a coset of a cyclic group) contains a permutation with a given cycle type. 

In the last section of the paper, we consider the problem \textsf{FixpointFree} that asks whether a given permutation group contains a fixpoint-free permutation, i.e., a permutation $\pi$ such that $\pi(a) \neq a$ for all $a$. It was shown in \cite{BuchheimJ05,CameronW10} that 
\textsf{FixpointFree}  is {\sf NP}-complete and as for {\sf CycleType}, {\sf NP}-hardness holds already for elementary abelian 2-groups.
The restriction of \textsf{FixpointFree} to cyclic permutation groups is not interesting ($\langle \pi \rangle$ contains a fixpoint-free permutation
if and only if $\pi$ is fixpoint-free). We show that the restriction of \textsf{FixpointFree} to 2-generated abelian permutation groups 
$\langle \pi, \tau \rangle$ is {\sf NP}-complete. Moreover, it is also \textsf{NP}-complete to check whether a coset $\pi \langle \tau \rangle$
of a cyclic permutation group, where in addition $\pi \tau = \tau \pi$, contains a fixpoint-free permutation. 

\paragraph{\bf Related work.} Fixpoint-free permutations are also known as \emph{derangements} and they have received a lot of attention
in combinatorics and group theory; see \cite{cameron2011} for a survey. Jordan proved in 1872 that every permutation group $G$ that acts transitively
on a finite set $\Omega$ of size at least two contains a derangement \cite{Jordan1872}. Arvind  proved that in this situation one can compute in polynomial time a derangement in $G$ \cite{Arvind13}. In the same paper, Arvind shows that the problem whether a given permutation group
$G$ contains a permutation with at least $k$ non-fixpoints is fixed parameter tractable with respect to the parameter $k$.

\section{Preliminaries}
\subsection{General notations}

For integers $1 \leq i \leq j$
we write $[i,j]$ for the set $\{i,i+1,\dots,j\}$  and $[j]$ for $[1,j]$.
For a prime $p$ and an integer $n$ we denote 
with $\nu_p(n)$
the largest positive integer $d$ such that $p^d \mid n$ (it is also called
the $p$-adic valuation of $n$). The greatest common divisor of integers $n_1, \ldots, n_k$ is denoted
by $\gcd(n_1,\ldots, n_k)$ and the least common multiple is denoted by 
$\lcm(n_1,\ldots, n_k)$.

We assume that the reader is familiar with basic concepts of complexity theory; see \cite{AroBar09} for more details.
With $\LS$ (also known as \emph{logspace}) 
we denote the class of all problems that can be solved on a deterministic Turing machine in logarithmic space. 
It is a subset of \textsf{P} (deterministic polynomial time).
\subsection{Permutations} \label{sec-perm}

For $n \geq 1$ we denote with $\Sym(n)$ the group of all permutations on $[n]$.
The identity permutation is denoted by $\mathrm{id}$. 
For $\pi \in \Sym(n)$ and $a \in [n]$ we also write $a\pi$ for $\pi(a)$.
There are two standard representations for a permutation $\pi\in \Sym(n)$:
\begin{compactitem}
\item The {\em pointwise representation} of $\pi$ is the tuple $[\pi(1), \pi(2), \ldots, \pi(n)]$.
\item The {\em cycle representation} is a list $\gamma_1 \gamma_2 \cdots \gamma_k$ of pairwise disjoint cycles. 
Every cycle $\gamma_i$ is written as a list $(a_0, a_1, \ldots, a_{\ell-1})$ (with $a_i \in [n]$) meaning that
 $a_k \pi = a_{k+1 \bmod \ell}$.
Fixpoints (cycles of the form $(i)$) are usually omitted in the cycle representation, but sometimes we will explicitly
list them.
\end{compactitem}

\smallskip
\noindent
Note that every cycle $(a_0, a_1, \ldots, a_{\ell-1})$ can be replaced by a cyclic rotation.
Moreover since disjoint cycles commute, the 
order of the cycles $\gamma_i$ is not relevant.

Computing the pointwise representation from the cycle representation is possible in uniform $\AC^0$ (this is a very small 
circuit complexity class contained in $\LS$).
On the other hand, the cycle representation can be computed in logspace
 from the pointwise representation and no better complexity bound is known \cite{CookM87}.
Therefore, as long as
one works with complexity classes that contain $\LS$  (which will be the case in this paper), there is no reason to specify which of the above two representations of permutations is chosen.

Let $\fpf(n) = \{ \pi \in \Sym(n) \mid a\pi \neq a \text{ for all } a \in [n]\}$ be the set of all \emph{fixpoint-free} permutations.
For $\pi_1, \ldots, \pi_k \in \Sym(n)$ we write $\langle \pi_1,\dots,\pi_k\rangle \leq \Sym(n)$
for the permutation group generated by $\pi_1, \ldots, \pi_k$.
The order $\ord(\pi)$ of $\pi \in \Sym(n)$ is the smallest  integer $i \geq 1$ such that $\pi^i = \mathrm{id}$.  
If $\gamma_1 \cdots \gamma_k$ is the cycle representation of $\pi$ and every cycle $\gamma_i$ has length $\ell_i$ then
the multiset $\ct(\pi) := \msl \ell_1, \ldots, \ell_k \msr$ is the {\em cycle type} of $\pi$.
Note that in this situation we have 
\begin{equation*} 
\ord(\pi) = \lcm(\ell_1, \ldots, \ell_k).
\end{equation*}
 The following lemma is well known, see e.g. \cite{Cam10}:
\begin{lemma}\label{lemma-conjugate}
For $\pi, \rho \in Sym(n)$ we have $\ct(\pi) = \ct(\rho)$ if and only if there is a $\sigma \in \Sym(n)$
such that $\pi = \sigma^{-1} \rho \sigma$.
\end{lemma}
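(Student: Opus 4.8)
The plan is to prove the two directions separately, in each case working directly with the cycle representation under the right-action convention $a\pi$ for $\pi(a)$, so that composition satisfies $a(fg) = (af)g$.

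For the direction ($\Leftarrow$) I would start from an identity $\pi = \sigma^{-1}\rho\sigma$ and track a single cycle of $\rho$ under conjugation. If $(a_0, a_1, \ldots, a_{\ell-1})$ is a cycle of $\rho$, so that $a_k\rho = a_{(k+1)\bmod\ell}$ for all $k$, then setting $b_k := a_k\sigma$ one computes
\begin{equation*}
b_k\pi = b_k(\sigma^{-1}\rho\sigma) = ((a_k\sigma\sigma^{-1})\rho)\sigma = (a_k\rho)\sigma = a_{(k+1)\bmod\ell}\sigma = b_{(k+1)\bmod\ell},
\end{equation*}
so $(b_0, b_1, \ldots, b_{\ell-1})$ is a cycle of $\pi$ of the same length $\ell$. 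Since $\sigma$ is a bijection, it sends the set of points of one cycle of $\rho$ onto the set of points of the corresponding cycle of $\pi$, distinct cycles map to distinct cycles, and the images together exhaust $[n]$; hence $\gamma \mapsto \gamma\sigma$ is a length-preserving bijection between the cycles of $\rho$ and the cycles of $\pi$ (fixpoints included as cycles of length one), which is exactly the statement $\ct(\pi) = \ct(\rho)$.

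For the direction ($\Rightarrow$) I would use that equal cycle type lets me line up the decompositions. Write the cycle representations of $\rho$ and $\pi$ with fixpoints included, say $\rho = \gamma_1 \cdots \gamma_m$ and $\pi = \delta_1 \cdots \delta_m$ where $\gamma_i$ and $\delta_i$ have the same length $\ell_i$ for each $i$; this is possible precisely because $\ct(\rho) = \ct(\pi)$. Writing $\gamma_i = (a_{i,0}, \ldots, a_{i,\ell_i-1})$ and $\delta_i = (c_{i,0}, \ldots, c_{i,\ell_i-1})$, define $\sigma$ by $a_{i,k}\sigma := c_{i,k}$ for all $i$ and all $0 \le k < \ell_i$. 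Since every point of $[n]$ occurs in exactly one cycle of $\rho$, and likewise exactly one cycle of $\pi$, this rule defines $\sigma$ on all of $[n]$ and makes it a bijection. The same computation as above, now read in the form $a_{i,k}\rho\sigma = c_{i,(k+1)\bmod\ell_i} = a_{i,k}\sigma\pi$, shows $\rho\sigma = \sigma\pi$, i.e. $\sigma^{-1}\rho\sigma = \pi$.

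The argument is essentially bookkeeping; the only points needing care are getting the composition order right under the right-action convention (so that $b_k(\sigma^{-1}\rho\sigma) = (a_k\rho)\sigma$) and observing that the chosen cyclic rotation of each cycle and the chosen pairing $\gamma_i \leftrightarrow \delta_i$ are immaterial to the conclusion. I do not expect any genuine obstacle: the lemma is classical and this is the textbook proof, which is why the reference to \cite{Cam10} suffices and no full proof is reproduced.
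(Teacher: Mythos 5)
Your proof is correct and complete: both directions check out under the right-action convention, and the bookkeeping (including fixpoints as length-one cycles and noting that the choice of rotation and pairing is immaterial) is handled properly. The paper itself gives no proof of this lemma --- it only cites \cite{Cam10} as the result is well known --- and what you have written is exactly the standard textbook argument that the citation stands in for, so there is nothing to compare beyond noting that your proof fills in what the paper deliberately omits.
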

Also the following lemma seems to be folklore. For completeness we give a proof.

\begin{lemma}\label{lemmasplit}
Let $x \in \mathbb{N}$ and $\gamma$ be a single cycle of length $\ell$. Then the cycle representation of $\gamma^x$ consists of $\gcd(x,\ell)$ many disjoint cycles of length $\ell/\gcd(x,\ell)$.
\end{lemma}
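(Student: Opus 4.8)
The plan is to reduce the statement to the elementary fact about the order of an element in the cyclic group $\mathbb{Z}/\ell\mathbb{Z}$. Write $\gamma = (a_0, a_1, \ldots, a_{\ell-1})$, so that $a_i\gamma = a_{(i+1)\bmod \ell}$ for all $i$. Iterating, I would first observe that $a_i\gamma^x = a_{(i+x)\bmod \ell}$, and more generally $a_i\gamma^{kx} = a_{(i+kx)\bmod \ell}$ for every $k \geq 0$. Hence the orbit of the point $a_i$ under $\langle \gamma^x\rangle$ is exactly $\{ a_{(i+kx)\bmod \ell} : k \geq 0\}$, and the set of indices occurring here is the coset $i + \langle x\rangle$ of the subgroup $\langle x\rangle$ generated by $x$ in $\mathbb{Z}/\ell\mathbb{Z}$.

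Next I would invoke the standard fact that $\langle x\rangle$ has order $\ell/\gcd(x,\ell)$ in $\mathbb{Z}/\ell\mathbb{Z}$; equivalently, the least positive integer $k$ with $kx \equiv 0 \pmod \ell$ is $\ell/\gcd(x,\ell)$. This is the one point that genuinely needs an argument, though it is entirely routine: writing $d = \gcd(x,\ell)$, one has $kx \equiv 0 \pmod \ell$ iff $k\,(x/d) \equiv 0 \pmod{\ell/d}$, and since $x/d$ and $\ell/d$ are coprime this holds iff $(\ell/d) \mid k$. Consequently every orbit of $\langle\gamma^x\rangle$ on $\{a_0,\ldots,a_{\ell-1}\}$ — i.e. every coset of $\langle x\rangle$ — has size exactly $\ell/\gcd(x,\ell)$, which means every cycle of $\gamma^x$ has length $\ell/\gcd(x,\ell)$.

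Finally, since the cycles of $\gamma^x$ partition the $\ell$ points moved by $\gamma$ (the points not in $\{a_0,\ldots,a_{\ell-1}\}$ are fixed by $\gamma$ and hence by $\gamma^x$, so they contribute nothing to the cycle representation), the number of cycles of $\gamma^x$ equals $\ell \big/ \big(\ell/\gcd(x,\ell)\big) = \gcd(x,\ell)$. This gives the claimed description of the cycle representation of $\gamma^x$. I do not expect any real obstacle beyond the coprimality computation mentioned above; the rest is bookkeeping about orbits and cosets.
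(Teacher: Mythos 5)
Your proof is correct, but it takes a genuinely different route from the paper's. The paper first settles the coprime case $\gcd(x,\ell)=1$ indirectly: choosing $y$ with $xy\equiv 1 \bmod \ell$, it observes that if $\gamma^x$ split into several shorter cycles then so would every power of $\gamma^x$, contradicting $(\gamma^x)^y=\gamma$. It then handles general $x$ by writing $m=\gcd(x,\ell)$, $z=x/m$, explicitly decomposing $\gamma^m$ into the $m$ cycles $(a_i,a_{m+i},\dots,a_{(k-1)m+i})$ of length $k=\ell/m$, and applying the coprime case to the $z$-th power of each. You instead identify the cycles of $\gamma^x$ directly with the orbits of $\langle\gamma^x\rangle$, i.e.\ with the cosets of the subgroup $\langle x\rangle\leq \mathbb{Z}/\ell\mathbb{Z}$, and compute that this subgroup has order $\ell/\gcd(x,\ell)$. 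Your version is a single uniform computation with no case split and no explicit decomposition, at the price of importing the (routine) subgroup-order fact and the orbit/coset vocabulary; the paper's version stays entirely at the level of cycle notation but needs two stages and a small proof by contradiction. Both arguments are complete and the bookkeeping at the end (dividing $\ell$ points into orbits of equal size $\ell/\gcd(x,\ell)$ to count $\gcd(x,\ell)$ cycles) is sound.
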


\begin{proof}
Let us first consider the case where $\gcd(x,\ell)=1$. Then there is a $y \in \mathbb{N}$ with $xy \equiv 1 \bmod \ell$.
If $\gamma^x$ consists of at least two cycles of length strictly smaller than $\ell$, 
then the same holds for every power of $\gamma^x$. This contradicts $(\gamma^x)^y = \gamma^{xy} = \gamma$.
This shows the statement of the lemma for the case $\gcd(x,\ell)=1$. 

For the general case let $m = \gcd(x,\ell), k = \ell/m$ and $z = x/m$. Then we can write the cycle  $\gamma$ as 
$\gamma = (a_0,\dots,a_{mk-1})$
for some pairwise different $a_i \in [n]$. 
For all $i \in [0,m-1]$ and $d \in [0,k-1]$ we have $a_{dm+i} \gamma^m = a_{(d+1)m+i}$ where all arithmetics in the indices
is done modulo $\ell =  mk$. We obtain
\begin{displaymath}
\gamma^x = (\gamma^m)^z = \prod_{i=0}^{m-1} (a_i,a_{m+i},a_{2m+i},\dots,a_{(k-1)m+i})^z.
\end{displaymath}
Since $\gcd(z,k)=1$ we obtain from the above case  $\gcd(x,\ell)=1$ that
\begin{displaymath}
(a_i,a_{m+i},a_{2m+i},\dots,a_{(k-1)m+i})^z
\end{displaymath}
is a cycle of length $k$. Hence, $\gamma^x$ splits into $m$ disjoint cycles of length $k$. \qed
\end{proof}
For integers $1 \leq i < j \leq n$ we denote with 
 $\llbracket i,j \rrbracket$ the cycle $(i,i+1,\dots,j) \in \Sym(n)$. We also use $\llbracket i \rrbracket$ instead of $\llbracket 1,i \rrbracket$ for $2 \leq i \leq n$. 
 
 We will consider the following two computational problems in this paper:
\begin{problem} {\sf CycleType} is the following problem: \label{cycleType}
\begin{compactitem}
\item input: $\pi_1,\dots,\pi_m,\rho \in \Sym(n)$
\item question: Is there an element $\pi \in \langle \pi_1,\dots,\pi_m \rangle$ such that $\ct(\pi) = \ct(\rho)$?
\end{compactitem}
\end{problem}
\begin{problem}
\textsf{FixpointFree} is the following problem:
\begin{compactitem}
\item input: $\pi_1,\dots,\pi_m \in \Sym(n)$
\item question: Does $\fpf(n) \cap \langle \pi_1,\dots,\pi_m \rangle \neq \emptyset$ hold?
\end{compactitem}
\end{problem}
Note that the unary encoding of $n$ (from $\Sym(n)$) is implicitly part of the inputs for {\sf CycleType} and \textsf{FixpointFree}.
It is easy to see that {\sf CycleType} and \textsf{FixpointFree} are in {\sf NP}: on input 
$\pi_1,\dots,\pi_m,\rho \in \mathsf{Sym}(n)$ we guess a permutation $\pi\in \mathsf{Sym}(n)$
and then check in polynomial time whether (i) $\pi \in \langle \pi_1,\dots,\pi_m \rangle$ \cite{BaLuSe87} and 
 (ii) $\ct(\pi) = \ct(\rho)$ (resp., $\pi \in \fpf(n)$).
 
For a given number $k$ we denote with {\sf CycleType}$(k)$ the restriction of
{\sf CycleType} where $m \leq k$ holds. In other words, the input permutation group is generated by $k$ permutations.
Moreover, if the input permutations $\pi_1, \ldots, \pi_k$ pairwise commute, then we write {\sf CycleType}$(\mathsf{ab},k)$
(\textsf{ab} stands for ``abelian''). Analogous restrictions are defined for \textsf{FixpointFree}.

\section{Cycle type in cyclic permutation groups}

In this section, we study the problem {\sf CycleType}(1), i.e., {\sf CycleType} for cyclic permutation groups.
Let us fix a symmetric group $\Sym(n)$. We assume that $n$ is given in unary encoding for the following.
Let $\mathsf{P}_n$ be the set of all primes in $[n]$. One 
can easily produce a list $p_1 < p_2 < \cdots < p_r$ of all those primes in logspace.
For this, one only needs the fact that integer division for unary encoded integers can be done in logspace (actually,
integer division of binary encoded integers can be also done in logspace \cite{HesseAB02} but this is not needed here).
We will only consider numbers where all prime divisors are from $\mathsf{P}_n$.
For such a number $a$ we denote with $\pe(a)$ (for prime exponents) the tuple $(e_1,\ldots, e_r)$
such that $a = \prod_{i=1}^r p_i^{e_i}$ is the  prime factorizaton of $a$. We will represent the exponents
$e_i$ in unary notation. From the unary representation of the number $a \in [n]$ one can easily
compute in logspace the tuple $\pe(a)$. We need the following fact:

\begin{lemma} \label{lemma-pe-ord}
From a given permutation $\pi \in \Sym(n)$ one can compute in logspace the tuple $\pe(\ord(\pi))$.
\end{lemma}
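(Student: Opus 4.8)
The plan is to reduce the computation of $\pe(\ord(\pi))$ to a small number of logspace-computable subtasks, exploiting the identity $\ord(\pi) = \lcm(\ell_1,\ldots,\ell_k)$ where $\ell_1,\ldots,\ell_k$ are the cycle lengths of $\pi$. Taking the $\lcm$ of numbers all of whose prime divisors lie in $\mathsf{P}_n = \{p_1,\ldots,p_r\}$ amounts, coordinatewise, to taking the maximum of the prime exponents: if $\pe(\ell_j) = (e_{1,j},\ldots,e_{r,j})$ then $\pe(\ord(\pi)) = (\max_j e_{1,j}, \ldots, \max_j e_{r,j})$. Since the exponents are represented in unary and $e_{i,j} \le \log_2 n$, each such maximum is a small number, and computing a maximum of a list of unary numbers is trivially in logspace. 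So it suffices to show that one can, in logspace, enumerate the cycle lengths $\ell_j$ of $\pi$ together with their tuples $\pe(\ell_j)$.

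First I would observe that determining the length of the cycle of $\pi$ containing a fixed point $a \in [n]$ is a logspace task: start at $a$, repeatedly apply $\pi$ (each application is a single table lookup in the pointwise representation), and count steps until we return to $a$; the only data we must store is the current point (which needs $O(\log n)$ bits) and the running counter (at most $n$, again $O(\log n)$ bits). To avoid processing the same cycle many times one uses the standard trick of designating the smallest element of each cycle as its representative: for each $a \in [n]$, first check in logspace whether $a$ is the minimum of its cycle (walk around the cycle once and verify no smaller element is encountered); if so, compute the cycle length $\ell$ and output $\pe(\ell)$, which is computable in logspace from the unary representation of $\ell$ as noted in the paragraph preceding the lemma. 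This produces the multiset of tuples $\{\pe(\ell_1),\ldots,\pe(\ell_k)\}$ using only logarithmic workspace.

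Finally I would combine these: loop over $i$ from $1$ to $r$, and for each prime $p_i$ compute the maximum, over all cycle representatives $a$, of the $i$-th coordinate of $\pe(\ell_a)$; this maximum, output in unary, is the $i$-th component of $\pe(\ord(\pi))$. All of this is a constant number of nested logspace-bounded loops with $O(\log n)$-bit counters and pointers, and logspace is closed under such composition, so the whole procedure runs in logspace.

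I do not expect a genuine obstacle here; the only mild subtlety is bookkeeping — making sure that the nested iteration (over primes, over cycle representatives, over positions within a cycle) is carried out with reusable logarithmic workspace rather than by storing intermediate lists, and recalling the standard facts that logspace transducers compose and that iterated pointer-following around a permutation cycle stays within logspace. The unary encoding of $n$ and of the exponents is what keeps everything comfortably within the logarithmic space bound.
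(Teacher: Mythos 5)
Your proposal is correct and follows essentially the same route as the paper: compute $\pe(\ell_j)$ for each cycle length and take the coordinatewise maximum of the exponents, using $\ord(\pi)=\lcm(\ell_1,\ldots,\ell_k)$. The paper simply assumes the cycle representation is given (having noted earlier that it is logspace-computable from the pointwise representation), whereas you additionally spell out the standard cycle-traversal bookkeeping; this is fine but not a different argument.
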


\begin{proof}
Assume that the cycle representation $\pi = \gamma_1 \gamma_2 \cdots \gamma_k$ is given.
Let $\ell_i \in [n]$ be the length of the cycle $\gamma_i$. We then compute in logspace the tuple $\pe(\ell_i) = (e_{i,1}, \ldots, e_{i,r})$.
Since $\ord(\pi) = \lcm(\ell_1, \ell_2, \ldots, \ell_k)$ we have
$\pe(\ord(\pi)) = (e_1, \ldots, e_r)$ with $e_i = \max\{ e_{1,i}, \ldots, e_{k,i}\}$. Clearly, these exponents $e_i$ can be 
computed in logspace. \qed
\end{proof}

\begin{lemma} \label{cycleTypeM1Lemma1}
For given permutations $\pi, \rho \in \Sym(n)$ one can check in logspace, whether $\ord(\rho)\mid\ord(\pi)$ holds.
\end{lemma}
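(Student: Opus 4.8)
The plan is to reduce the divisibility test $\ord(\rho)\mid\ord(\pi)$ to a coordinate-wise comparison of the prime-exponent tuples $\pe(\ord(\pi))$ and $\pe(\ord(\rho))$. The key fact is the elementary number-theoretic observation that for integers $a,b$ all of whose prime divisors lie in $\mathsf{P}_n$, we have $a \mid b$ if and only if $\nu_{p_i}(a) \le \nu_{p_i}(b)$ for every $i \in [r]$; equivalently, if $\pe(a) = (e_1,\dots,e_r)$ and $\pe(b) = (f_1,\dots,f_r)$, then $a \mid b \iff e_i \le f_i$ for all $i$. Since both $\ord(\pi)$ and $\ord(\rho)$ divide $n! $ but in fact are bounded by $n$ in each cycle length and are $\lcm$s of numbers in $[n]$, all of their prime divisors lie in $\mathsf{P}_n$, so the criterion applies.

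The steps I would carry out are: first, invoke Lemma~\ref{lemma-pe-ord} to compute in logspace the tuples $\pe(\ord(\pi)) = (e_1,\dots,e_r)$ and $\pe(\ord(\rho)) = (f_1,\dots,f_r)$, with all exponents in unary. Second, cycle through the indices $i = 1,\dots,r$ and compare $e_i$ with $f_i$; accept if $f_i \le e_i$ for all $i$ and reject otherwise. The comparison of two unary-encoded numbers is trivially in logspace (indeed in $\AC^0$), and the outer loop over $i$ needs only a logarithmically-sized counter, so the whole procedure runs in logspace. Composition of logspace computations is logspace, so combining the two applications of Lemma~\ref{lemma-pe-ord} with the comparison loop stays within $\LS$.

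Correctness is immediate from the number-theoretic criterion stated above: $\ord(\rho) \mid \ord(\pi)$ holds precisely when $f_i \le e_i$ for every $i \in [r]$, which is exactly what the procedure checks. I do not anticipate a genuine obstacle here; the only point requiring a word of care is that the exponents are kept in unary so that the comparison and the bookkeeping stay logspace-bounded — but this is guaranteed since each exponent $e_i$ satisfies $p_i^{e_i} \le \lcm(\ell_1,\dots,\ell_k) \le \lcm(1,2,\dots,n)$, and more crudely each individual $e_i$ is at most $\log_2 n \le n$, so unary encoding of the exponents is polynomially bounded. Hence the claimed logspace bound follows.
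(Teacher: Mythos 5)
Your proposal is correct and follows exactly the paper's own argument: compute $\pe(\ord(\pi))$ and $\pe(\ord(\rho))$ via Lemma~\ref{lemma-pe-ord} and check the coordinate-wise inequality of the unary exponent tuples. The additional remarks on unary encoding and composition of logspace computations are fine but not needed beyond what the paper already establishes.
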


\begin{proof}
Let $\pe(\ord(\rho)) = (e_1, \ldots, e_r)$ and $\pe(\ord(\pi)) = (e'_1, \ldots, e'_r)$. Then $\ord(\rho) \mid \ord(\pi)$  if and only
if $e_i \leq  e'_i$ for all $i \in [r]$. Therefore, the statement of the lemma follows from Lemma~\ref{lemma-pe-ord}. \qed
\end{proof}

\begin{lemma}\label{cycleTypeM1Lemma2}
There is a logspace algorithm with the following specification:
\begin{compactitem}
\item input: $\pi,\rho \in \Sym(n)$ such that $\ord(\rho) \mid \ord(\pi)$ and $a \in [n]$.
\item output: $a\pi^{d} \in [n]$ where $d = \ord(\pi)/\ord(\rho)$
\end{compactitem}
\end{lemma}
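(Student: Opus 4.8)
The plan is to exploit the fact that, although the exponent $d = \ord(\pi)/\ord(\rho)$ can be far larger than any polynomial in $n$ (the order of a permutation in $\Sym(n)$ is bounded only by Landau's function $e^{(1+o(1))\sqrt{n\ln n}}$, so the binary representation of $d$ need not even fit on a logspace work tape), the value $a\pi^d$ does not depend on all of $d$. The point $a$ lies in a unique cycle $\gamma$ of $\pi$, of some length $\ell \le n$, and $a\pi^d = a\pi^{d \bmod \ell}$. Hence it suffices to compute the residue $d' := d \bmod \ell$, which is a number in $[0,\ell-1] \subseteq [0,n-1]$ and so fits in $O(\log n)$ bits, and then to apply $\pi$ to $a$ exactly $d'$ times. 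This last step is a plain logspace loop: a counter bounded by $n$ together with the current element of $[n]$.

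It remains to compute $d \bmod \ell$ in logspace, and this is the only part that needs care. First I would compute, via Lemma~\ref{lemma-pe-ord}, the tuples $\pe(\ord(\pi)) = (e'_1,\dots,e'_r)$ and $\pe(\ord(\rho)) = (e_1,\dots,e_r)$, where $p_1 < \dots < p_r$ is the logspace-computable list of all primes in $[n]$. Since $\ord(\rho)\mid\ord(\pi)$ by hypothesis, we have $e_i \le e'_i$ for every $i$ (cf. the proof of Lemma~\ref{cycleTypeM1Lemma1}), so $d = \prod_{i=1}^r p_i^{f_i}$ with $f_i := e'_i - e_i \le e'_i \le \log_2 n$. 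I cannot form $d$, nor even the individual factors $p_i^{f_i}$, explicitly (these may have $\Theta(\log^2 n)$ bits); instead I maintain a running value $c$, initialized to $1 \bmod \ell$, and loop over $i$ from $1$ to $r$ and, for each $i$, over an inner counter from $1$ to $f_i$, replacing $c$ by $(c \cdot p_i) \bmod \ell$ at each step. After all iterations $c = d \bmod \ell = d'$. Every intermediate quantity stays bounded by $n^2$, hence is representable in $O(\log n)$ bits; the loop counters are bounded by $n$ and by $\log_2 n$; and a single modular multiplication $(c,p_i,\ell) \mapsto (c\cdot p_i)\bmod\ell$ of $O(\log n)$-bit integers is logspace computable. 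Since logspace is closed under running a logspace "driver" that keeps only $O(\log n)$ bits of state between calls to a logspace subroutine, the whole computation of $d'$ is in logspace.

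Finally, the cycle $\gamma$ of $\pi$ through $a$ and its length $\ell$ are obtained in logspace either by reading the (logspace-computable) cycle representation of $\pi$, or simply by iterating $a, a\pi, a\pi^2, \dots$ until the value $a$ reappears, counting the at most $n$ steps. Composing the three stages — compute $\ell$; compute $d' = d\bmod\ell$; apply $\pi$ to $a$ exactly $d'$ times — yields the desired logspace algorithm. The sole genuine obstacle is the potentially super-polynomial size of $d$, and it is dealt with entirely by the reduction modulo $\ell$ above; everything else is bookkeeping with $O(\log n)$-bit counters.
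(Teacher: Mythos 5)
Your proposal is correct and follows essentially the same route as the paper's proof: reduce the exponent modulo the length $\ell$ of the cycle of $\pi$ containing $a$, obtain the prime exponents of $d$ as differences $e'_i - e_i$ via Lemma~\ref{lemma-pe-ord}, and compute $d \bmod \ell$ by $\sum_i f_i$ modular multiplications. No gaps.
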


\begin{proof}
By Lemma~\ref{lemma-pe-ord} we can produce in logspace the tuples 
$\pe(\ord(\rho)) = (e_1, \ldots, e_r)$ and $\pe(\ord(\pi)) = (e'_1, \ldots, e'_r)$. Since 
$\ord(\rho) \mid \ord(\pi)$ we have $e_i \leq e'_i$ for all $i \in [r]$.
We then have $\pe(d) = (f_1, \ldots, f_r)$ with $f_i = e'_i-e_i$ and this tuple
can be also produced in logspace.

Let $\gamma_1 \gamma_2 \cdots \gamma_k$ be the cycle representation of $\pi$.
We then compute in logspace the length $\ell \in [n]$ of the unique cycle $\gamma_i$ that contains $a \in [n]$.
We have $a \pi^d = a \pi^{d \bmod \ell}$.
Since all primes $p_i$ and exponents $f_i$ are given in unary notation, we can compute in logspace the value 
$d \bmod \ell$ by going over the prime factorization $\prod_{i=1}^r p_i^{f_i}$ and making
$\sum_{i=1}^r f_i$ many multiplications modulo $\ell$. Once $d \bmod \ell$ is computed, we can
finally compute $a \pi^{d \bmod \ell}$ in logspace. \qed
\end{proof}

\begin{lemma}\label{cycleTypeM1Lemma3}
Let $\pi,\rho \in \Sym(n)$. Then the following holds:
\begin{compactitem}
\item If $\ct(\pi) = \ct(\rho)$ then $\ord(\pi) = \ord(\rho)$.
\item For all $i \in \mathbb{N}$ we have $\ord(\pi) = \ord(\pi^i)$ if and only if $\ct(\pi) = \ct(\pi^i)$.
\end{compactitem}
\end{lemma}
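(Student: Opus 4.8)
The plan is to treat the two bullet points in turn, the second reducing largely to the first. For the first bullet, recall that the excerpt already records $\ord(\pi) = \lcm(\ell_1,\dots,\ell_k)$ whenever $\gamma_1\cdots\gamma_k$ is the cycle representation of $\pi$ and $\ell_j$ is the length of $\gamma_j$. Since the multiset $\msl \ell_1,\dots,\ell_k\msr$ is by definition $\ct(\pi)$, the order is a function of the cycle type alone; hence $\ct(\pi)=\ct(\rho)$ immediately yields $\ord(\pi)=\ord(\rho)$.

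For the second bullet, the ``if'' direction is just the first bullet applied with $\rho=\pi^i$: from $\ct(\pi)=\ct(\pi^i)$ we get $\ord(\pi)=\ord(\pi^i)$. For the ``only if'' direction, suppose $\ord(\pi)=\ord(\pi^i)$. Here I would invoke the standard fact that in the cyclic group $\langle\pi\rangle$ the element $\pi^i$ has order $\ord(\pi)/\gcd(i,\ord(\pi))$; combined with $\ord(\pi^i)=\ord(\pi)$ this forces $\gcd(i,\ord(\pi))=1$. (If one prefers not to quote this, the same conclusion follows from a short $\nu_p$-valuation bookkeeping over the cycle lengths $\ell_j$, using that passing from $\ell_j$ to $\ell_j/\gcd(i,\ell_j)$ can only decrease the $p$-adic valuation; this valuation argument is essentially the only place where any computation is needed, and even it is routine.)

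Finally, from $\gcd(i,\ord(\pi))=1$ it follows that $\gcd(i,\ell_j)=1$ for every cycle length $\ell_j$ of $\pi$, because each $\ell_j$ divides $\ord(\pi)=\lcm(\ell_1,\dots,\ell_k)$, so any common divisor of $i$ and $\ell_j$ divides $\gcd(i,\ord(\pi))=1$. Applying Lemma~\ref{lemmasplit} to each $\gamma_j$ with exponent $i$ then shows that $\gamma_j^i$ is again a single cycle, of length $\ell_j/\gcd(i,\ell_j)=\ell_j$. As $\gamma_1,\dots,\gamma_k$ have pairwise disjoint supports, so do $\gamma_1^i,\dots,\gamma_k^i$, and $\pi^i=\gamma_1^i\cdots\gamma_k^i$ therefore has the same multiset of cycle lengths as $\pi$, i.e.\ $\ct(\pi^i)=\ct(\pi)$. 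The only point worth flagging is the (immediate) identification of the order of $\pi^i$ as a permutation with its order as an element of $\langle\pi\rangle$; beyond that, the lemma is folklore-level and presents no real obstacle.
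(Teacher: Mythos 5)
Your proof is correct and follows essentially the same route as the paper's: both derive the first bullet from $\ord(\pi)=\lcm(\ell_1,\dots,\ell_k)$, reduce the ``if'' direction of the second bullet to the first, and handle the ``only if'' direction by deducing $\gcd(i,\ord(\pi))=1$, pushing this down to each cycle length, and applying Lemma~\ref{lemmasplit}. The only difference is that you make explicit the standard order formula $\ord(\pi^i)=\ord(\pi)/\gcd(i,\ord(\pi))$, which the paper uses implicitly.
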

\begin{proof}
For the first statement note that if $\msl \ell_1, \ell_2, \ldots, \ell_k \msr$ is the common cycle type of $\pi$ and $\rho$ then
$\ord(\pi) = \lcm(\ell_1, \ell_1, \ldots, \ell_k) = \ord(\rho)$.
 Therefore we only have to show that if $\ord(\pi) = \ord(\pi^i)$ then $\ct(\pi) = \ct(\pi^i)$. Let $\pi = \gamma_1 \cdots \gamma_k$ be the 
 cycle representation of $\pi$.
 Then we have $\pi^i = \gamma_1^i \cdots \gamma_k^i$. Since $\ord(\pi) = \ord(\pi^i)$ we obtain $\gcd(\ord(\pi),i)=1$. Because of $\ord(\gamma_j) \mid \ord(\pi)$ we get $\gcd(\ord(\gamma_j),i)=1$ for all $j \in [k]$. By Lemma~\ref{lemmasplit},
 $\gamma_j$ and $\gamma_j^i$ are cycles of the same length and thus $\pi$ and $\pi^i$ have the same cycle type. \qed
\end{proof}

\begin{theorem}\label{cycleTypeM1Theorem}
$\mathsf{CycleType}(1)$ is in {\sf L}.
\end{theorem}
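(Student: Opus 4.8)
The plan is to reduce the problem $\mathsf{CycleType}(1)$ — given $\pi, \rho \in \Sym(n)$, decide whether some power $\pi^i$ satisfies $\ct(\pi^i) = \ct(\rho)$ — to checking a single, canonically chosen power of $\pi$. The key structural observation, via Lemma~\ref{cycleTypeM1Lemma3}, is that $\ct(\pi^i) = \ct(\rho)$ can only happen when $\ord(\pi^i) = \ord(\rho)$, which in particular forces $\ord(\rho) \mid \ord(\pi)$. So the first step is: compute $\pe(\ord(\pi))$ and $\pe(\ord(\rho))$ by Lemma~\ref{lemma-pe-ord}, and reject immediately (by Lemma~\ref{cycleTypeM1Lemma1}) unless $\ord(\rho) \mid \ord(\pi)$.

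Assuming $\ord(\rho) \mid \ord(\pi)$, set $d = \ord(\pi)/\ord(\rho)$. The candidate I would test is $\pi^d$: its order is $\ord(\pi)/\gcd(d,\ord(\pi)) = \ord(\pi)/d = \ord(\rho)$. The claim to prove is that $\langle \pi \rangle$ contains an element of cycle type $\ct(\rho)$ if and only if $\ct(\pi^d) = \ct(\rho)$. The ``if'' direction is trivial. For ``only if'', suppose $\ct(\pi^i) = \ct(\rho)$ for some $i$; then $\ord(\pi^i) = \ord(\rho)$, so $\gcd(i, \ord(\pi)) = d$. Write $i = d \cdot i'$ with $\gcd(i', \ord(\pi)/d) = \gcd(i', \ord(\rho)) = 1$. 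Then $\pi^i = (\pi^d)^{i'}$, and since $\gcd(i', \ord(\pi^d)) = 1$, Lemma~\ref{cycleTypeM1Lemma3} (second bullet) gives $\ct((\pi^d)^{i'}) = \ct(\pi^d)$. Hence $\ct(\pi^d) = \ct(\pi^i) = \ct(\rho)$, as required.

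It remains to verify that the test $\ct(\pi^d) = \ct(\rho)$ can be carried out in logspace. By Lemma~\ref{cycleTypeM1Lemma2}, for each $a \in [n]$ we can compute $a\pi^d$ in logspace, hence the pointwise representation of $\pi^d$ is logspace-computable (iterating over $a$, with only a pointer of logarithmic size carried between calls). From a pointwise representation the cycle type can be computed in logspace, so we obtain $\ct(\pi^d)$ and $\ct(\rho)$ as multisets of cycle lengths, each of which is just a list (e.g. sorted, or given as counts $c_1, \ldots, c_n$ of how many cycles of each length occur); comparing two such lists is clearly logspace. Composing these logspace subroutines keeps us in $\LS$, since logspace is closed under composition.

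The main obstacle is not any single step but making sure the logspace bookkeeping genuinely composes: in particular, we never materialize $\pi^d$ explicitly as an intermediate object on the work tape (which could be fine here, being $O(n\log n)$ bits, but the cleaner argument is compositional), and we rely on $d$ itself not being needed in binary — indeed $d$ may be exponentially large, which is exactly why Lemma~\ref{cycleTypeM1Lemma2} computes $a\pi^d$ via the prime-exponent tuple $\pe(d)$ rather than $d$ itself. Once one is careful that every quantity manipulated (prime lists, unary exponents, cycle lengths in $[n]$, residues modulo cycle lengths) stays of polynomial magnitude and hence logarithmic bit-length, the theorem follows by assembling Lemmas~\ref{lemma-pe-ord}–\ref{cycleTypeM1Lemma3}.
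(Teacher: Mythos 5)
Your proposal is correct and follows essentially the same route as the paper: reject unless $\ord(\rho)\mid\ord(\pi)$, reduce to the single candidate $\pi^d$ with $d=\ord(\pi)/\ord(\rho)$ via Lemma~\ref{cycleTypeM1Lemma3}, and test $\ct(\pi^d)=\ct(\rho)$ in logspace using Lemma~\ref{cycleTypeM1Lemma2}. The only cosmetic difference is that you justify $\pi^q=(\pi^d)^{i'}$ with $\gcd(i',\ord(\pi^d))=1$ by a direct gcd computation, whereas the paper invokes the uniqueness of subgroups of a given order in a cyclic group; both are sound.
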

\begin{proof}
Let $\pi,\rho \in \Sym(n)$ be the two input permutations of \textsf{CycleType}(1). It is asked whether there is a $q \in \mathbb{N}$ such that 
$\ct(\pi^q) = \ct(\rho)$. By Lemma~\ref{cycleTypeM1Lemma1} we can check in logspace whether $\ord(\rho) \mid \ord(\pi)$
holds. If this is not the case, then by the first statement of Lemma~\ref{cycleTypeM1Lemma3} there is no $q$ such that $\ct(\pi^q) = \ct(\rho)$
 and we can immediately reject. Let us now assume that $\ord(\rho) \mid \ord(\pi)$ and let $d = \ord(\pi)/\ord(\rho)$ in the following.
Note that $\ord(\pi^d) = \ord(\rho)$.

\begin{ownclaim} \label{claim-logspace}
There is a $q \in \mathbb{N}$ such that $\ct(\pi^q) = \ct(\rho)$
if and only if $\ct(\pi^d) = \ct(\rho)$.
\end{ownclaim}
\emph{Proof of Claim~\ref{claim-logspace}.}
The direction from right to left is trivial. Hence, let us assume that there is a $q$ such that
$\ct(\pi^q) = \ct(\rho)$.
By Lemma~\ref{cycleTypeM1Lemma3}, we have $\ord(\pi^q) = \ord(\rho)$. We get $\ord(\pi^d) = \ord(\rho) = \ord(\pi^q)$. 
Since $\langle \pi \rangle$ has exactly one subgroup of order $\ord(\rho)$ it follows that $\langle \pi^q \rangle = \langle \pi^d \rangle$. 
Let $\pi^q = (\pi^d)^i$ for $i \in \mathbb{N}$. Since $\ord(\pi^d) = \ord(\pi^q) = \ord((\pi^d)^i)$, 
the second statement of Lemma~\ref{cycleTypeM1Lemma3} implies that $\pi^q$ and $\pi^d$ (and hence $\rho$ and $\pi^d$) have
the same cycle type. This shows Claim~\ref{claim-logspace}.

\smallskip
\noindent
By Claim~\ref{claim-logspace}, it suffices to check in logspace whether $\ct(\pi^d) = \ct(\rho)$.
By Lemma~\ref{cycleTypeM1Lemma2} we can compute in logspace
the pointwise representation and hence the cycle representation of $\pi^d$. From the cycle representation
of a permutation we can of course compute in logspace the cycle type.
\qed
\end{proof}

\section{Cycle type in the 2-generated abelian case}

In this section we show that \textsf{CycleType} becomes \textsf{NP}-complete if the input permutation group is abelian and generated
by two elements.

\begin{theorem}\label{theoremcycletypenpcomplete}
$\mathsf{CycleType}(\mathsf{ab},2)$ is {\sf NP}-complete.
\end{theorem}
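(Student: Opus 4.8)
The plan is to reduce an NP-complete problem to $\mathsf{CycleType}(\mathsf{ab},2)$; membership in NP was already observed in the preliminaries. Since Cameron and Wu's hardness proof uses an elementary abelian 2-group (which may need many generators), I need a source problem whose structure can be packed into just two commuting permutations. The natural candidate is a simultaneous-congruence / CRT-style problem, or more precisely a variant of \textsc{Subset Sum} or \textsc{Partition} reformulated modulo several primes. Concretely, I would start from a \textsf{NP}-complete problem of the form: given coprime moduli $m_1,\dots,m_s$ (say distinct primes $p_1,\dots,p_s$) and residue targets, decide whether some integer $q$ simultaneously satisfies a prescribed system of constraints ``$q \equiv r_j$ or $q \equiv r_j'$ $\pmod{p_j}$'' — a disjunctive congruence system, which is easily seen to be \textsf{NP}-hard by encoding \textsc{3-SAT} or \textsc{Subset Sum}. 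The point of passing through such a problem is that a power $\pi^q$ of a single fixed permutation $\pi$ ``reads off'' $q$ modulo the cycle lengths of $\pi$ simultaneously, by Lemma~\ref{lemmasplit}.

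The construction I envisage: take $\pi$ to be a product of disjoint cycles whose lengths are powers of the primes $p_j$ (and possibly some auxiliary primes), arranged so that for each $j$ the behaviour of $\pi^q$ on the block of $p_j$-cycles encodes $q \bmod p_j$ via the splitting described in Lemma~\ref{lemmasplit}: a cycle of length $p_j$ in $\pi$ becomes, in $\pi^q$, either a single $p_j$-cycle (if $p_j \nmid q$) or $p_j$ fixed points (if $p_j \mid q$), and more refined information is obtained from cycles of length $p_j^k$. Then $\tau$ is a disjoint permutation used to ``shift'' which residues are being tested — since $\tau$ and $\pi$ act on disjoint point sets they automatically commute, and a general element of $\langle \pi,\tau\rangle$ has the form $\pi^q \tau^t$. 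The target permutation $\rho$ is chosen so that $\ct(\rho)$ is matched by $\pi^q\tau^t$ exactly when $(q,t)$ corresponds to a satisfying assignment of the disjunctive congruence system. The paper's own remark — that NP-hardness already holds for the coset $\pi\langle\tau\rangle$ — suggests the cleaner route is to fix the $\pi$-exponent and let only $\tau$ vary, i.e. encode the entire combinatorial search into the powers of a single $\tau$ while $\pi$ contributes a fixed ``offset'' to the cycle type; I would try to set it up that way, so that $\langle\tau\rangle$ alone ranges over all candidate witnesses and $\pi$ is essentially a constant perturbation forcing the disjunctive structure.

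The key steps, in order, are: (1) pick the precise NP-complete source problem and state its hardness (or cite a standard reference for the disjunctive/simultaneous congruence version, or give a one-line reduction from \textsc{3-SAT}); (2) describe the point set $[n]$ as a disjoint union of blocks, one family of blocks per prime/clause, with $n$ polynomially bounded so the unary encoding is legitimate — here I must be careful that using primes $p_1,\dots,p_s$ keeps $n=\sum$ (block sizes) polynomial, which forces the primes to be small, so I should take $p_1,\dots,p_s$ to be the first $s$ primes (size $O(s\log s)$ by the prime number theorem) rather than arbitrary large primes; (3) define $\pi$ and $\tau$ explicitly in cycle notation and verify $\pi\tau=\tau\pi$ (immediate from disjoint supports); (4) define $\rho$ and its cycle type; (5) prove the equivalence ``the disjunctive system is satisfiable $\iff$ $\exists\,q,t:\ \ct(\pi^q\tau^t)=\ct(\rho)$'' using Lemma~\ref{lemmasplit} to compute $\ct(\pi^q)$ and $\ct(\tau^t)$ from $q,t$ and the cycle lengths.

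I expect the main obstacle to be step (5) in the ``only genuine witnesses match'' direction — i.e.\ ruling out \emph{spurious} elements $\pi^q\tau^t$ that accidentally produce the right multiset of cycle lengths without corresponding to a satisfying assignment. Because the cycle type is only a multiset of lengths (it forgets which block a cycle came from), I must design the block sizes so that the ``signature'' contributed by each prime/clause is unambiguous: for instance, by giving block $j$ a distinctive length $p_j^{k}$ that cannot be assembled out of pieces coming from other blocks, and by using padding cycles of lengths that are powers of a separate prime to ``absorb'' the parts of the multiset that are not supposed to carry information. Getting these coprimality and size-separation conditions exactly right — while keeping $n$ polynomial — is the delicate part; once the bookkeeping is pinned down, the forward direction and the commutativity are routine, and NP membership is already in hand, so the theorem follows.
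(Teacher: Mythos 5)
There is a genuine gap, and it sits at the heart of your construction: you arrange for $\pi$ and $\tau$ to commute by giving them \emph{disjoint supports}. But if the supports are disjoint, then $\ct(\pi^q\tau^t)$ is simply the disjoint union (of multisets) $\ct(\pi^q) \uplus \ct(\tau^t)$, so the exponents $q$ and $t$ never interact: $\tau^t$ cannot ``shift which residues are being tested'' by $\pi^q$, because it does not touch any point that $\pi$ moves. In particular your ``cleaner route'' --- fixing the $\pi$-exponent and letting only $t$ vary --- asks whether $\ct(\tau^t)$ equals the fixed multiset $\ct(\rho)\setminus\ct(\pi)$ for some $t$, which is exactly an instance of $\mathsf{CycleType}(1)$ and hence solvable in logspace by Theorem~\ref{cycleTypeM1Theorem}. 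More generally, since $\ct(\pi^q)$ depends only on $\gcd(q,\ord(\pi))$ (by Lemma~\ref{lemmasplit}), the disjoint-support version degenerates into a search over pairs of divisors of $\ord(\pi)$ and $\ord(\tau)$ with no coupling between the two searches; it cannot encode a single global witness (a satisfying assignment) that must be consistent across all clauses.

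The missing idea is to make the two generators commute for a different reason: let them act on the \emph{same} blocks, with both generators restricted to each block being powers of one and the same long cycle. Then on a block carrying a cycle $\gamma$ of length $r$, the element $\pi_1^{x_1}\pi_2^{x_2}$ equals $\gamma^{s x_1 + t x_2}$ for block-dependent constants $s,t$, and by Lemma~\ref{lemmasplit} its cycle structure reads off the \emph{linear combination} $s x_1 + t x_2$ modulo the prime factors of $r$. This linear coupling of the two exponents, combined with CRT to prescribe $s$ and $t$ modulo several small primes per block, is what allows one to encode a 1-in-3 (exact hitting set) constraint per clause; the paper's proof reduces from exact 3-hitting set in precisely this way, and the ``no spurious witnesses'' analysis you correctly anticipate as the main obstacle in your step (5) is then carried out by counting, for each clause block, the number of cycles of each length that $\ct(\rho)$ forces. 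Without replacing disjoint supports by this shared-support mechanism, no choice of block sizes or padding in your plan can produce the required interaction between the two exponents.
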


\begin{proof}
Since \textsf{CycleType} is  in {\sf NP} (see the remark at the end of Section~\ref{sec-perm}), it remains to show {\sf NP}-hardness. For this we 
exhibit a logspace reduction from \textsf{X3HS} (exact 3-hitting set), which is the following problem:
\begin{compactitem}
\item Input: a finite set $S$ and a set $\mathcal{B} \subseteq 2^S$ of subsets of $S$ all of size $3$.
\item Question: Is there a subset $T \subseteq S$ such that $|T \cap C| = 1$ for all $C \in \mathcal{B}$?
\end{compactitem}
Note that \textsf{X3HS} is the same problem as positive \textsf{1-in-3-SAT}, which is a well-known {\sf NP}-complete problem; 
see  \cite{gareyjohnson} for more details.

Let $S$ be a finite set and $\mathcal{B} \subseteq 2^S$ be a set of subsets of $S$ all of size $3$. W.l.o.g. assume that $S = [n]$ and let $\mathcal{B} = \{C_1,\dots,C_m\}$. Let $p_1 < \dots < p_{2n}$ be the first $2n$ primes with $p_1 > 3$. Moreover let $q_1 < \dots < q_m$ be the next $m$ primes with $p_{2n} < q_1$. We associate $i \in S$ with the prime $p_i$ and $C_j \in \mathcal{B}$ with the prime $q_j$. We will work with the group
\begin{displaymath}
G = \prod_{i=1}^n \Sym(p_ip_{n+i}) \times \prod_{j=1}^m \Sym(p_n^3q_j)^6
\end{displaymath}
which naturally embedds into $\Sym(N)$ for $N = \sum_{i=1}^n p_ip_{n+i} + 6\sum_{j=1}^m p_n^3q_j$.
Let $f : G \to \Sym(N)$ be this embedding. 
When we talk of the cycle type of an element $g \in G$, we always refer to the cycle type of the permutation $f(g) \in \Sym(N)$. If
$g = (\pi_1, \ldots, \pi_n, \rho_1, \ldots, \rho_{6m}) \in G$, then this cycle type is obtained by taking the disjoint union (of multisets)
of the cycle types of all the $\pi_i$ and $\rho_j$. 

 For $j \in [m]$ we define $r_j = q_j \cdot \prod_{i \in C_j} p_i$. Moreover for $j \in [m]$ and  all $d \in [6]$ we define the number $s_{j,d} \in [0,r_j-1]$ as the smallest positive integer satisfying the following congruences in which we assume $C_j = \{i_1,i_2,i_3\}$ with $i_1 < i_2 < i_3$:
\begin{align*}
s_{j,1}&\equiv -1 \bmod p_{i_1} && s_{j,2} \equiv 0 \bmod p_{i_1} && s_{j,3} \equiv 0 \bmod p_{i_1}\\
s_{j,1}&\equiv 0  \bmod p_{i_2} && s_{j,2} \equiv -1 \bmod p_{i_2} && s_{j,3} \equiv 0 \bmod p_{i_2}\\
s_{j,1}&\equiv 0 \bmod p_{i_3} && s_{j,2} \equiv 0 \bmod p_{i_3} && s_{j,3} \equiv -1 \bmod p_{i_3}\\
s_{j,1}&\equiv 1 \bmod q_j && s_{j,2} \equiv 1 \bmod q_j && s_{j,3} \equiv 1 \bmod q_j \\[2mm]
s_{j,4}&\equiv -1 \bmod p_{i_1} && s_{j,5} \equiv -3 \bmod p_{i_1} && s_{j,6} \equiv -2 \bmod p_{i_1}\\
s_{j,4}&\equiv -2 \bmod p_{i_2} && s_{j,5} \equiv -1 \bmod p_{i_2} && s_{j,6} \equiv -3 \bmod p_{i_2}\\
s_{j,4}&\equiv -3 \bmod p_{i_3} && s_{j,5} \equiv -2 \bmod p_{i_3} && s_{j,6} \equiv -1 \bmod p_{i_3}\\
s_{j,4}&\equiv 1 \bmod q_j && s_{j,5} \equiv 1 \bmod q_j && s_{j,6} \equiv 1 \bmod q_j
\end{align*}
Moreover, we define the number $t_j \in [0, r_j-1]$ as the smallest positive integer satisfying
\begin{equation*}
t_j\equiv 1 \bmod p_{i_a} \text{ for all } a \in [3] \text{ and } t_j\equiv 0 \bmod q_j.
\end{equation*}
We define the input group elements $\rho, \pi_1, \pi_2 \in G$ as follows, where $i$ ranges over $[n]$, $j$ ranges over $[m]$ and $i_1 < i_2 < i_3$ are the elements of $C_j$ (recall that $\llbracket m \rrbracket$ denotes the cycle $(1,2,\ldots,m)$):
\allowdisplaybreaks
\begin{align*} 
\rho &= (\zeta_1,\dots,\zeta_n,\eta_1,\dots,\eta_m)\\
\zeta_i &= \llbracket p_ip_{n+i} \rrbracket\\
\eta_j &= (\llbracket r_j \rrbracket^{p_{i_1}p_{i_2}p_{i_3}}, \llbracket r_j \rrbracket^{p_{i_1}}, \llbracket r_j \rrbracket^{p_{i_2}}, \llbracket r_j \rrbracket^{p_{i_3}}, \llbracket r_j \rrbracket, \llbracket r_j \rrbracket) \\[1mm]
\pi_1 &= (\alpha_1,\dots,\alpha_n,\beta_1,\dots,\beta_m)\\
\alpha_i &= \llbracket p_ip_{n+i} \rrbracket\\
\beta_j &= (\llbracket r_j \rrbracket^{s_{j,1}}, \llbracket r_j \rrbracket^{s_{j,2}}, \llbracket r_j \rrbracket^{s_{j,3}}, \llbracket r_j \rrbracket^{s_{j,4}}, \llbracket r_j \rrbracket^{s_{j,5}}, \llbracket r_j \rrbracket^{s_{j,6}}) \\[1mm]
\pi_2 &= (\gamma_1,\dots,\gamma_n,\delta_1,\dots,\delta_m)\\
\gamma_i &= \mathrm{id}\\
\delta_j &= (\llbracket r_j \rrbracket^{t_j}, \llbracket r_j \rrbracket^{t_j}, \llbracket r_j \rrbracket^{t_j}, \llbracket r_j \rrbracket^{t_j}, \llbracket r_j \rrbracket^{t_j}, \llbracket r_j \rrbracket^{t_j})
\end{align*}
Note that $\pi_1$ and $\pi_2$ commute. 

We will show there are $x_1,x_2 \in \mathbb{N}$ such that $\ct(\rho) = \ct(\pi_1^{x_1}\pi_2^{x_2})$ if and only if there is a subset $T \subseteq S$ such that $|T \cap C_j| = 1$ for all $j \in [m]$.

First suppose that there are $x_1,x_2 \in \mathbb{N}$ with $\ct(\rho) = \ct(\pi_1^{x_1}\pi_2^{x_2})$. We define
\begin{equation} \label{eq-T}
T = \{i \in [n] \mid x_2 \not \equiv 0 \bmod p_i\}.
\end{equation}
\vspace*{-6mm}
\begin{ownclaim}\label{claimx1notequiv0}
For all $i \in [n]$ and $j \in [m]$ we have $x_1 \not \equiv 0 \bmod p_i$, $x_1 \not \equiv 0 \bmod p_{n+i}$ and $x_1 \not \equiv 0 \bmod q_j$.
\end{ownclaim}
\emph{Proof of Claim~\ref{claimx1notequiv0}.} The claim follows from Lemma~\ref{lemmasplit} and the following facts:
\begin{compactitem}
\item $\zeta_i$ and $\alpha_i$ are cycles of length $p_ip_{n+i}$.
\item $\pi_2$ does not contain any cycle whose length is a multiple of $p_{n+i}$.
\item $t_j \equiv 0 \bmod q_j$ and hence $\pi_2$ also does not contain any cycle whose length is a multiple of $q_j$.
\item $\rho$ and $\pi_1$ both contain $6$ pairwise disjoint permutations of the form $\llbracket r_j \rrbracket^z$, where $z$ is not a multiple of $q_j$. \qed
\end{compactitem}

\begin{ownclaim}\label{claimcjexactlyonea}
For all $C_j = \{i_1,i_2,i_3\} \in \mathcal{B}$ there is a (necessarily unique) $a \in [3]$ such that $x_2 \not \equiv 0 \bmod p_{i_a}$ and $x_2 \equiv 0 \bmod p_{i_b}$ for all $b \in [3] \setminus \{a\}$.
\end{ownclaim}
\emph{Proof of Claim~\ref{claimcjexactlyonea}.} 
Let $j \in [m]$ and assume $C_j = \{i_1,i_2,i_3\}$ with $i_1 < i_2 < i_3$. Consider $\eta_j$. By Lemma~\ref{lemmasplit} $\llbracket r_j \rrbracket^{p_{i_1}p_{i_2}p_{i_3}}$ consists of $p_{i_1}p_{i_2}p_{i_3}$ cycles of length $q_j$ and these are the only cycles of length $q_j$ in $\rho$.
Hence, $\beta_j^{x_1}\delta_j^{x_2}$ must contain exactly $p_{i_1}p_{i_2}p_{i_3}$ cycles of length $q_j$. By Lemma~\ref{lemmasplit} this can only be achieved if there is a unique $a \in [6]$ such that $x_1s_{j,a} + x_2t_j \equiv 0 \bmod p_{i_c}$ for all $c \in [3]$. Also note that $x_1s_{j,b} + x_2t_j \equiv x_1 \not \equiv 0 \bmod q_j$ for all $b \in [6]$ by Claim~\ref{claimx1notequiv0} and $x_2t_j \equiv x_2 \bmod p_{i_c}$ for all $c \in [3]$. 

We want to show that $a \in [3]$. In order to get a contradiction,
suppose that $a \in \{4,5,6\}$. The congruence $x_1s_{j,a} + x_2t_j \equiv 0 \bmod p_{i_c}$ gives us $x_2 \equiv -x_1s_{j,a} \bmod p_{i_c}$ for all $c \in [3]$. Then, for all $b \in [3] \setminus \{a-3\}$ we have
\begin{displaymath}
x_1s_{j,b} + x_2t_j \equiv x_1s_{j,b} - x_1s_{j,a} \equiv x_1(-1 - s_{j,a}) \not \equiv 0 \bmod p_{i_b},
\end{displaymath}
where $x_1 \not \equiv 0 \bmod p_{i_b}$ by Claim~\ref{claimx1notequiv0} and 
$-1 - s_{j,a}  \not \equiv 0 \bmod p_{i_b}$ since 
$s_{j,a} \not\equiv -1 \bmod  p_{i_b}$ 
for $b \neq a-3$ (also note that $p_{i_b} > 2$). Similarly, for all $b \in [3] \setminus \{a-3\}$ we get
\begin{displaymath}
x_1s_{j,3+b} + x_2t_j \equiv x_1s_{j,3+b} - x_1s_{j,a} \equiv x_1(s_{j,3+b} - s_{j,a}) \not \equiv 0 \bmod p_{i_b},
\end{displaymath}
where as above $x_1 \not \equiv 0 \bmod p_{i_b}$ by Claim~\ref{claimx1notequiv0} and $s_{j,3+b} - s_{j,a} \not \equiv 0 \bmod p_{i_b}$ since $a \neq 3+b$ and $s_{j,a} \not \equiv s_{j,3+b} \bmod p_{i_c}$ for all $c \in [3]$. 

Moreover, for all $b \in [3] \setminus \{a-3\}$ and all $c \in [3] \setminus \{b\}$ we have
\begin{align*}
x_1s_{j,b} + x_2t_j & \equiv x_1s_{j,b}-x_1s_{j,a} \equiv -x_1s_{j,a} \not \equiv 0 \bmod p_{i_c} \text{ and } \\
x_1s_{j,3+b} + x_2t_j & \equiv x_1s_{j,3+b}-x_1s_{j,a} \equiv x_1(s_{j,3+b}-s_{j,a}) \not \equiv 0 \bmod p_{i_c}.
\end{align*}
Finally, for all $b \in [6]$ we have $x_1s_{j,b} + x_2t_j \not \equiv 0 \bmod q_j$
as pointed out above. Taken together, these congruences yield for all $b \in [3] \setminus \{a-3\}$:
\begin{displaymath}
\gcd(x_1s_{j,b} + x_2t_j  , r_j) = 
\gcd(x_1s_{j,3+b} + x_2t_j, r_j) = 1.
\end{displaymath}
Hence, by Lemma~\ref{lemmasplit}, $\beta_j^{x_1}\delta_j^{x_2}$ contains at least $4$ cycles of length $r_j$. However $\eta_j$ 
contains only $2$ cycles of length $r_j$ and $\rho$ does not contain any other cycles of length $r_j$, which gives us a contradiction. Thus we obtain $a \in [3]$ and by this
\begin{displaymath}
x_2 \equiv -x_1s_{j,a} \equiv x_1 \not \equiv 0 \bmod p_{i_a},
\end{displaymath}
where $x_1 \not \equiv 0 \bmod p_{i_a}$ holds by Claim~\ref{claimx1notequiv0}. Moreover, for all $b \in [3] \setminus \{a\}$ we obtain
\begin{displaymath}
x_2 \equiv -x_1s_{j,a} \equiv 0 \bmod p_{i_b}.
\end{displaymath}
This shows Claim~\ref{claimcjexactlyonea}.
\qed

\medskip
\noindent
We can now show that $|T \cap C_j| = 1$ for all $j \in [m]$. Let $j \in [m]$. By Claim~\ref{claimcjexactlyonea} there is a unique $i \in C_j$ such that $x_2 \not \equiv 0 \bmod p_i$. Thus $i \in T$ by \eqref{eq-T}. Moreover for all $h \in C_j \setminus \{i\}$ we have $x_2 \equiv 0 \bmod p_h$ by Claim~\ref{claimcjexactlyonea} and hence $h \notin T$. Thus, we get $|T \cap C_j| = 1$.

For the other direction, suppose there is a subset $T \subseteq [n]$ such that $|T \cap C_j| = 1$ for all $j \in [m]$. We define $x_1 = 1$ and $x_2$ as the smallest positive integer satisfying the congruences
\begin{displaymath}
x_2 \equiv \begin{cases}
1 \bmod p_i & \text{ if } i \in T\\
0 \bmod p_i & \text{ if } i \not \in T
\end{cases}
\end{displaymath}
for all $i \in [n]$. Since $x_1 = 1$, $\rho$ and $\pi_1^{x_1} \pi_2^{x_2}$ both contain a unique cycle of length $p_i p_{n+i}$ for all $i \in [n]$.
All other cycles in $\rho$ and $\pi_1^{x_1} \pi_2^{x_2}$ result from powers of $\llbracket r_j \rrbracket$ for some $j \in [m]$.
Consider a $j \in [m]$ and let $C_j = \{i_1,i_2,i_3\}$ with $i_1 < i_2 < i_3$. By Lemma~\ref{lemmasplit}, $\eta_j$ consists of 
\begin{compactenum}[(i)]
\item $p_{i_1}p_{i_2}p_{i_3}$ cycles of length $q_j$, 
\item $p_{i_1}$ cycles of length $p_{i_2}p_{i_3}q_j$, 
\item $p_{i_2}$ cycles of length $p_{i_1}p_{i_3}q_j$, 
\item $p_{i_3}$ cycles of length $p_{i_1}p_{i_2}q_j$ and 
\item $2$ cycles of length $r_j$.
\end{compactenum} 

\medskip
\noindent
We have to show that 
\begin{align*}
 \beta_j\delta_j^{x_2} =  (&\llbracket r_j \rrbracket^{s_{j,1}+x_2t_j}, \llbracket r_j \rrbracket^{s_{j,2}+x_2t_j}, \llbracket r_j \rrbracket^{s_{j,3}+x_2t_j}, \\ &\llbracket r_j \rrbracket^{s_{j,4}+x_2t_j}, \llbracket r_j \rrbracket^{s_{j,5}+x_2t_j}, \llbracket r_j \rrbracket^{s_{j,6}+x_2t_j})
\end{align*}
contains the same cycle lengths with the same
multiplicities as in (i)--(v). Note that $s_{j,d} + x_2t_j \equiv 1 \bmod q_j$ for all $d \in [6]$. Let $a \in [3]$ be the unique element with $i_a \in T$. Then $x_2 \equiv 1 \bmod p_{i_a}$ and  $x_2 \equiv 0 \bmod p_{i_b}$ for all $b \in [3] \setminus \{a\}$. We obtain
\begin{align*}
s_{j,a}+x_2t_j &\equiv -1 + 1 \equiv 0 \bmod p_{i_a} \text{ and }\\
s_{j,a}+x_2t_j &\equiv 0 + 0 \equiv 0 \bmod p_{i_b} \text{ for all } b \in [3] \setminus \{a\}.
\end{align*}
By Lemma~\ref{lemmasplit}, $\llbracket r_j \rrbracket^{s_{j,a}+x_2t_j}$ consists of $p_{i_1}p_{i_2}p_{i_3}$ cycles of length $q_j$. Moreover
\begin{align*}
s_{j,3+a}+x_2t_j &\equiv -1 + 1 \equiv 0 \bmod p_{i_a} \text{ and } \\
s_{j,3+a}+x_2t_j &\equiv s_{j,3+a} + 0 \not \equiv 0 \bmod p_{i_b} \text{ for all } b \in [3] \setminus \{a\}
\end{align*}
(for the second point we use the fact that all primes $p_i$ are larger than $3$).
By Lemma~\ref{lemmasplit}, $\llbracket r_j \rrbracket^{s_{j,3+a}+x_2t_j}$ consists of $p_{i_a}$ cycles of length $q_j\prod_{b \in [3] \setminus \{a\}} p_{i_b}$. For all $b \in [3] \setminus \{a\}$ we have 
\begin{align*}
s_{j,b}+x_2t_j &\equiv 0 + 1 \equiv 1 \bmod p_{i_a}, \\
s_{j,b}+x_2t_j &\equiv s_{j,b} + 0 \equiv -1 \bmod p_{i_b} \text{ and } \\
s_{j,b}+x_2t_j &\equiv s_{j,b} + 0 \equiv 0 \bmod p_{i_c}, \text{where } \{c\} = [3] \setminus \{a,b\}.
\end{align*}
By Lemma~\ref{lemmasplit}, $\llbracket r_j \rrbracket^{s_{j,b}+x_2t_j}$ consists of $p_{i_c}$ cycles of length $q_jp_{i_a}p_{i_b}$ with $\{c\} = [3] \setminus \{a,b\}$. Finally, for all $b \in [3] \setminus \{a\}$ we have
\begin{align*}
s_{j,3+b}+x_2t_j &\equiv s_{j,3+b} + 1 \not \equiv 0 \bmod p_{i_a} \text{ and }\\
s_{j,3+b}+x_2t_j &\equiv s_{j,3+b} + 0 \not \equiv 0 \bmod p_{i_c} \text{ for all } c \in [3] \setminus \{a\} .
\end{align*}
Hence, $\llbracket r_j \rrbracket^{s_{j,3+b}+x_2t_j}$ is a single cycle of length $r_j$. This shows that
$\ct(\eta_j)=\ct(\beta_j\delta_j^{x_2})$ and concludes the proof of the theorem.
\qed
\end{proof}
The construction from the previous proof yields the following additional result:

\begin{corollary} \label{coro-cosets}
The following problem is  {\sf NP}-complete:
\begin{compactitem}
\item input: $\rho, \pi_1,\pi_2\in \Sym(n)$ such that $\pi_1$ and $\pi_2$ commute
\item question: Is there is a $\pi \in \pi_1 \langle \pi_2 \rangle$ such that $\ct(\rho) = \ct(\pi)$?
\end{compactitem}
\end{corollary}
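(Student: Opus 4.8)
The plan is to observe that the very reduction constructed in the proof of Theorem~\ref{theoremcycletypenpcomplete} already witnesses this corollary, with essentially no extra work. First, for membership in {\sf NP}: on input $\rho,\pi_1,\pi_2 \in \Sym(n)$ with $\pi_1\pi_2 = \pi_2\pi_1$ we guess a permutation $\pi \in \Sym(n)$ and verify in polynomial time that (i) $\pi_1^{-1}\pi \in \langle \pi_2 \rangle$, which is an instance of subgroup membership in a permutation group and hence in {\sf P}~\cite{BaLuSe87,FurstHL80,Sims70}, and (ii) $\ct(\pi) = \ct(\rho)$. Since $\langle \pi_2 \rangle$ is a finite cyclic group we have $\pi_1\langle \pi_2\rangle = \{\pi_1\pi_2^{x} \mid x \in \mathbb{N}\}$, so (i) says exactly that $\pi \in \pi_1\langle \pi_2 \rangle$.

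For {\sf NP}-hardness I would reuse verbatim the logspace map $(S,\mathcal{B}) \mapsto (\rho,\pi_1,\pi_2)$ from the proof of Theorem~\ref{theoremcycletypenpcomplete}; there it was already shown that $\pi_1$ and $\pi_2$ commute, so the output is a legal instance of the coset problem. It remains to check that $(S,\mathcal{B})$ is a positive instance of \textsf{X3HS} if and only if $\pi_1\langle \pi_2\rangle$ contains an element of cycle type $\ct(\rho)$, and both implications are already contained in the proof of Theorem~\ref{theoremcycletypenpcomplete}. Indeed, if $T \subseteq S$ is an exact hitting set, then the ``other direction'' of that proof exhibits an exponent $x_2$ with $\ct(\rho) = \ct(\pi_1^{1}\pi_2^{x_2}) = \ct(\pi_1\pi_2^{x_2})$, and $\pi_1\pi_2^{x_2} \in \pi_1\langle \pi_2\rangle$. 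Conversely, any $\pi = \pi_1\pi_2^{x_2} \in \pi_1\langle \pi_2\rangle$ with $\ct(\pi) = \ct(\rho)$ lies in $\langle \pi_1,\pi_2\rangle$ and has cycle type $\ct(\rho)$, so the first direction of that proof (which starts from an arbitrary element $\pi_1^{x_1}\pi_2^{x_2}$ of the group) produces an exact hitting set.

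There is essentially no new obstacle: the only point worth double-checking is that the forward analysis in the proof of Theorem~\ref{theoremcycletypenpcomplete} was carried out for an arbitrary element $\pi_1^{x_1}\pi_2^{x_2}$ of $\langle \pi_1,\pi_2\rangle$ and therefore applies in particular to a coset element ($x_1 = 1$), and that the backward construction there really did choose $x_1 = 1$, which it did. Hence the identical reduction, together with the {\sf NP} membership argument above, establishes Corollary~\ref{coro-cosets}.
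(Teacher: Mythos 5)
Your proof is correct and takes essentially the same route as the paper: the paper's own argument for Corollary~\ref{coro-cosets} is exactly the observation that the reduction of Theorem~\ref{theoremcycletypenpcomplete} already uses $x_1=1$ in the backward direction and handles arbitrary $x_1$ in the forward direction, so the same instance works for the coset problem. Your explicit treatment of {\sf NP} membership via coset membership testing is a harmless elaboration of what the paper leaves implicit.
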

\begin{proof}
The instance  $\rho,\pi_1,\pi_2$ of $\mathsf{CycleType}(\mathsf{ab},2)$ that we constructed in the proof of Theorem~\ref{theoremcycletypenpcomplete}
has the property that
there are $x_1, x_2 \in \mathbb{N}$ such that $\rho$ and $\pi_1^{x_1}\pi_2^{x_2}$ have the same cycle type
if and only if there is $x_2 \in \mathbb{N}$ such that $\rho$ and $\pi_1\pi_2^{x_2}$ have the same cycle type.
This yields the corollary.
\qed
\end{proof}
Whereas it can be decided in logspace whether a cyclic permutation group $\langle \pi_1 \rangle$ contains 
a permutation with a given cycle type (Theorem~\ref{cycleTypeM1Theorem}), the same problem for cosets of cyclic permutation groups is \textsf{NP}-complete (Corollary~\ref{coro-cosets}).

\section{Fixpoint freeness in the 2-generated abelian case}

Our main result for the problem $\mathsf{FixpointFree}$ is:
\begin{theorem}\label{theoremfpfnpcomplete}
$\mathsf{FixpointFree}(\mathsf{ab},2)$ is ${\sf NP}$-complete.
\end{theorem}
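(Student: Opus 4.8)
We have already observed that \textsf{FixpointFree} lies in \textsf{NP}, so only \textsf{NP}-hardness needs to be shown. The plan is to give a logspace reduction from \textsf{X3HS}, following closely the pattern of the proof of Theorem~\ref{theoremcycletypenpcomplete}. Given an instance with $S=[n]$ and $\mathcal{B}=\{C_1,\dots,C_m\}$ (each $C_j$ of size $3$), I would again work inside a direct product of symmetric groups that embeds into some $\Sym(N)$: associate with every element $i\in[n]$ a prime $p_i$ and with every set $C_j$ one (or a few) further ``local'' primes, all pairwise distinct and larger than $3$, and index the factors of the product by the $i$'s and the $j$'s. The two generators $\pi_1,\pi_2$ are tuples whose components are powers $\llbracket \ell \rrbracket^{s}$ of single cycles, with the exponents $s$ prescribed by a system of congruences in the $p_i$ and the local primes (via the Chinese Remainder Theorem); since on every component both generators are powers of a common cycle, $\pi_1$ and $\pi_2$ commute. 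The intended reading is that for $x_1,x_2\in\mathbb{N}$ the candidate hitting set is $T=\{\,i\in[n]\mid p_i\nmid x_1\,\}$, while $x_2$ supplies, through its residues modulo the local primes, mutually independent parameters, one for each $C_j$, that steer the ``gadget'' attached to $C_j$.

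The heart of the correctness argument is the assertion that, for every $j$, the block of $\pi_1^{x_1}\pi_2^{x_2}$ belonging to $C_j$ is fixpoint-free if and only if $|T\cap C_j|=1$; granting this, fixpoint-freeness of $\pi_1^{x_1}\pi_2^{x_2}$ is equivalent (using the Chinese Remainder Theorem again to realize all local parameters by one common $x_2$) to $|T\cap C_j|=1$ for all $j$, i.e.\ to $T$ being an exact hitting set. One half of the block assertion is straightforward: writing $C_j=\{i_1,i_2,i_3\}$, the $C_j$-block contains the cycle $\llbracket p_{i_1}p_{i_2}p_{i_3} \rrbracket^{x_1}$, which by Lemma~\ref{lemmasplit} consists of fixpoints exactly when $p_{i_1}p_{i_2}p_{i_3}\mid x_1$, i.e.\ when $T\cap C_j=\emptyset$; hence fixpoint-freeness forces $|T\cap C_j|\ge 1$. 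The other half---forcing $|T\cap C_j|\le 1$---is where the second generator and the local prime(s) of $C_j$ come in: the remaining cycles of the block, again analysed with Lemma~\ref{lemmasplit}, are arranged so that they can all be made nontrivial for a suitable value of the local parameter read off from $x_2$ precisely when at most one of $p_{i_1},p_{i_2},p_{i_3}$ fails to divide $x_1$.

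I expect this last point to be the main obstacle. In contrast with the proof of Theorem~\ref{theoremcycletypenpcomplete}, where one can count how many cycles of a prescribed length appear in order to pin residues down, for fixpoint-freeness only the coarse dichotomy ``cycle trivial versus cycle nontrivial'' is visible, and a naive gadget runs into a polarity mismatch: ``$i$ is selected'' corresponds naturally to the non-divisibility $p_i\nmid x_1$, whereas ``$|T\cap C_j|\le 1$'' is a conjunction of \emph{disjunctions of divisibilities} $p_{i_b}\mid x_1\vee p_{i_c}\mid x_1$, and a fixpoint of a single power of a cycle is a \emph{conjunction} of divisibilities of a linear form in $x_1,x_2$, never such a disjunction. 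The way around it is to push the disjunction into the existential quantifier on $x_2$: for each $C_j$ and each of the three positions one installs a small family of cycles whose lengths are multiples of a local prime $q$ and whose exponents are pairwise ``generic'' linear forms in $x_1,x_2$, so that---using that $q$ suitably chosen linear forms cover all residues modulo $q$ exactly when $x_1\not\equiv 0\bmod q$---the family can be made simultaneously nontrivial over the choice of $x_2\bmod q$ iff the required divisibility of $x_1$ holds. Working out this design, and verifying that no further, spurious fixpoint can occur inside a block, is the technical core of the proof.

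Finally, the instance produced by the reduction has the property that in the ``yes'' direction $x_1$ may be fixed to $1$; hence the same construction shows that it is already \textsf{NP}-complete whether a coset $\pi_1\langle\pi_2\rangle$ of a cyclic permutation group, with $\pi_1\pi_2=\pi_2\pi_1$, contains a fixpoint-free permutation, which is the strengthening announced in the introduction.
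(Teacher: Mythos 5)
Your proposal correctly reduces the task to \textsf{NP}-hardness and correctly diagnoses the central difficulty: fixpoint-freeness of a component $\llbracket \ell \rrbracket^{a x_1 + b x_2}$ is a disjunction of \emph{non}-divisibilities of a single linear form, so ``at least one element of $C_j$ is selected'' comes for free, while ``at most one is selected'' is a disjunction of \emph{divisibilities} and does not. But the argument stops exactly there: the gadget that is supposed to enforce $|T\cap C_j|\le 1$ is never constructed. You describe what it should achieve and then explicitly defer it (``Working out this design \dots is the technical core of the proof''). Since this is precisely where all the work lies, the proof as given does not establish the theorem. The sketched mechanism is also pointed in a dubious direction: linear forms read modulo a fresh local prime $q$ carry no information about divisibility of $x_1$ by the primes $p_{i_b}$ attached to the elements of $C_j$, and for fixed $x_1$ a family of fewer than $q$ forms that are non-constant in $x_2$ always leaves some residue of $x_2$ modulo $q$ avoiding all of them; so any working covering family has to live modulo the $p_i$'s themselves, not modulo an auxiliary $q$.

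The paper takes a different and complete route. It reduces from \textsf{3-SAT} rather than \textsf{X3HS}, so each clause only needs an ``at least one'' condition, realized by a single component $\llbracket \tilde{p}_{i_1}\tilde{p}_{i_2}\tilde{p}_{i_3} \rrbracket^{z_2}$. The exactness is moved into a per-variable gadget with two primes $p_i,\bar{p}_i$ (one for each polarity of the literal) that forces \emph{exactly one} of $p_i\mid z_2$ and $\bar{p}_i\mid z_2$: the component pair $(\mathrm{id},\llbracket p_i\bar{p}_i \rrbracket)$ forbids ``both divide'', and a covering family of $(p_i-1)(\bar{p}_i-1)$ components $\llbracket p_i\bar{p}_i \rrbracket^{s_{i,l,k}z_1+z_2}$, with $s_{i,l,k}$ ranging over all pairs of nonzero residues, forbids ``neither divides'' (if $z_1,z_2$ are units modulo both primes, some exponent $s_{i,l,k}z_1+z_2$ vanishes modulo $p_i\bar{p}_i$ and that component becomes the identity). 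This is exactly the kind of explicit ``disjunction of divisibilities'' gadget your sketch is missing. If you insist on \textsf{X3HS} as the source problem, applying this same covering-family idea to each of the three pairs of primes inside $C_j$ would enforce $|T\cap C_j|\le 1$, but that construction must be written down and verified; as it stands your proof has a gap at its core. Your final remark about the coset version is consistent with the paper's corollary, but it inherits the same gap.
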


\begin{proof}
We give a logspace reduction from \textsf{3-SAT} (the satisfiability problem for 
conjunctions of clauses, where every clause consists
of exactly three literals and a literal is either a boolean variable $x$ or a negated boolean variable $\bar x$).  
For this take a finite set of variables $X = \{x_1,\dots,x_n\}$ and a set of clauses $\mathcal{C} = \{C_1,\ldots,C_m\}$.
Every $C_j \in \mathcal{C}$ is a set of three literals.  When we write $C_j$ as 
$C_j = \{\tilde{x}_{i_1},\tilde{x}_{i_2},\tilde{x}_{i_3}\}$, every $\tilde{x}_{i_k}$ is either $x_{i_k}$ or $\bar x_{i_k}$ and
 we always assume that $i_1 < i_2 < i_3$. 
 A truth assignment $\sigma : X \to \{0,1\}$ is implicitly extended to all literals by setting $\sigma(\bar{x}_i) = 1 - \sigma(x_i)$.

Let $p_1,\dots,p_n,\bar{p}_1,\dots,\bar{p}_n$ be the first $2n$ primes. We associate the positive literal $x_i$ with $p_i$ and the negative literal $\bar{x}_i$ with $\bar{p}_i$ and define
\begin{displaymath}
\tilde{p}_{i} = \begin{cases}
p_{i} & \text{if } \tilde{x}_{i} = x_{i}, \\
\bar{p}_{i} & \text{if } \tilde{x}_{i} = \bar{x}_{i} .
\end{cases}
\end{displaymath}
For the clause $C_j = \{\tilde{x}_{i_1},\tilde{x}_{i_2},\tilde{x}_{i_3}\}$ define $r_j = \tilde{p}_{i_1}\tilde{p}_{i_2}\tilde{p}_{i_3}$.
Moreover,  for all $i \in [n],l \in [p_i-1]$ and $k \in [\bar{p}_i-1]$ let $s_{i,l,k}$ be the unique 
number in $[p_i\bar{p}_i-1]$ with
\begin{equation*}
s_{i,l,k}\equiv l \bmod p_i \quad \text{ and } \quad
s_{i,l,k}\equiv k \bmod \bar{p}_i.
\end{equation*}
We will work with the group
\begin{equation*}
G   = \prod_{i=1}^n \left(\Sym(p_i) \times \Sym(\bar{p}_i) \times \Sym(p_i\bar{p}_i)^{(p_i-1)(\bar{p}_i-1)+1}\right) \times 
 \prod_{j=1}^m \Sym(r_j) .
\end{equation*}
The group $G$ naturally embeds into $\Sym(N)$ for
\begin{displaymath}
N = \sum_{i=1}^n (p_i + \bar{p}_i + p_i\bar{p}_i((p_i-1)(\bar{p}_i-1)+1)) + \sum_{j=1}^m r_j.
\end{displaymath}
Now we define the input permutations $\pi_1$ and $\pi_2$ as follows, where $i$ ranges over $[n]$, $l$ ranges over $[p_i-1]$, $k$ ranges over $[\bar{p}_i-1]$ and $j$ ranges over $[m]$:
\begin{align*}
\pi_1&= (\alpha_1,\dots,\alpha_n,\beta_1,\dots,\beta_m) \text{ with}  \\
\alpha_i&= (\alpha_{i,1},\alpha_{i,2},\alpha_{i,3},\alpha_{i,1,1},\dots,\alpha_{i,p_i-1,\bar{p}_i-1}) \\
\alpha_{i,1}&= \llbracket p_i \rrbracket \\
\alpha_{i,2}&= \llbracket \bar{p}_i \rrbracket \\
\alpha_{i,3}&= \mathrm{id} \\
\alpha_{i,l,k}&= \llbracket p_i\bar{p}_i \rrbracket^{s_{i,l,k}} \\
\beta_j&= \mathrm{id}  \\[1mm]
 \pi_2 & = (\gamma_1,\dots,\gamma_n,\delta_1,\dots,\delta_m) \text{ with} \\
 \gamma_i & = (\gamma_{i,1},\gamma_{i,2},\gamma_{i,3},\gamma_{i,1,1},\dots,\gamma_{i,p_i-1,\bar{p}_i-1}) \\
 \gamma_{i,1} & = \gamma_{i,2} = \mathrm{id} \\
\gamma_{i,3} &= \gamma_{i,l,k} = \llbracket p_i\bar{p}_i \rrbracket \\
\delta_j &= \llbracket r_j \rrbracket
\end{align*}
Note that $\pi_1$ and $\pi_2$ commute. We will show that $\mathcal{C}$ is satisfiable if and only if there are  $z_1,z_2 \in \mathbb{N}$ such that $\pi_1^{z_1}\pi_2^{z_2} \in \fpf(n)$.

First, suppose that  there are $z_1,z_2 \in \mathbb{N}$ such that $\pi_1^{z_1}\pi_2^{z_2} \in \fpf(n)$.

\begin{ownclaim}\label{claimx1not0pifpf}
For all $i \in [n]$ we have $z_1 \not \equiv 0 \bmod p_i$ and $z_1 \not \equiv 0 \bmod \bar{p}_i$.
\end{ownclaim}
We have $\alpha_{i,1}^{z_1}\gamma_{i,1}^{z_2} = \alpha_{i,1}^{z_1} = \llbracket p_i \rrbracket^{z_1}$
and hence by Lemma~\ref{lemmasplit} we obtain $z_1 \not \equiv 0 \bmod p_i$. Analogously we obtain $z_1 \not \equiv 0 \bmod \bar{p}_i$.
\qed

\begin{ownclaim}\label{claimx2not01pibpi}
For all $i \in [n]$ we have $z_2 \equiv 0 \bmod p_i$ if and only if $z_2 \not \equiv 0 \bmod \bar{p}_i$.
\end{ownclaim}
Assume that $z_2 \equiv 0 \bmod p_i$ and $z_2 \equiv 0 \bmod \bar{p}_i$. Then we obtain by
\begin{displaymath}
\alpha_{i,3}^{z_1}\gamma_{i,3}^{z_2} = \gamma_{i,3}^{z_2} = \llbracket p_i\bar{p}_i \rrbracket^{z_2} = \mathrm{id}
\end{displaymath}
a contradiction. Now assume that $z_2 \not \equiv 0 \bmod p_i$ and $z_2 \not \equiv 0 \bmod \bar{p}_i$. Since by Claim~\ref{claimx1not0pifpf} we have $z_1 \not \equiv 0 \bmod p_i$ and $z_1 \not \equiv 0 \bmod \bar{p}_i$ we can define $l \in [p_i-1]$ and $k \in [\bar{p}_i-1]$ as the smallest positive integers satisfying the congruences
\begin{equation*}
l\equiv -z_2z_1^{-1} \bmod p_i \quad \text{ and } \quad k\equiv -z_2z_1^{-1} \bmod \bar{p}_i.
\end{equation*}
From this we obtain $s_{i,l,k} \equiv -z_2z_1^{-1} \bmod p_i\bar{p}_i$
and hence
\begin{displaymath}
\alpha_{i,l,k}^{z_1}\gamma_{i,l,k}^{z_2} = \llbracket p_i\bar{p}_i \rrbracket^{s_{i,l,k} \cdot z_1}\llbracket p_i\bar{p}_i \rrbracket^{z_2} = \llbracket p_i\bar{p}_i \rrbracket^{-z_2}\llbracket p_i\bar{p}_i \rrbracket^{z_2} = \mathrm{id},
\end{displaymath}
which is again a contradiction. This shows Claim~\ref{claimx2not01pibpi}
\qed

\begin{ownclaim}\label{claimclauseexistst}
For all $j \in [m]$ there is an $a \in [3]$ such that $z_2 \not \equiv 0 \bmod \tilde{p}_{i_a}$, where $C_j = \{\tilde{x}_{i_1},\tilde{x}_{i_2},\tilde{x}_{i_3}\}$.
\end{ownclaim}
Since we must have  $\beta_j^{z_1}\delta_j^{z_2} = \delta_j^{z_2} = \llbracket r_j \rrbracket^{z_2} \in \fpf(r_j)$ 
we must have $z_2 \not \equiv 0 \bmod r_j = \tilde{p}_{i_1}\tilde{p}_{i_2}\tilde{p}_{i_3}$. Hence, there  is an $a \in [3]$ such that $z_2 \not \equiv 0 \bmod \tilde{p}_{i_a}$.
\qed

\medskip
\noindent
We define the truth assignment $\sigma : X \to \{0,1\}$ by
\begin{displaymath}
\sigma(x_i) = \begin{cases}
1 & \text{if } z_2 \not \equiv 0 \bmod p_i\\
0 & \text{if } z_2 \equiv 0 \bmod p_i
\end{cases}
\end{displaymath}
for all $i \in [n]$ and show that every clause in $\mathcal{C}$ contains a literal that is mapped to $1$ by $\sigma$.
Let $j \in [m]$ and $C_j = \{\tilde{x}_{i_1},\tilde{x}_{i_2},\tilde{x}_{i_3}\}$. By Claim~\ref{claimclauseexistst} there is
an  $a \in [3]$ such that $z_2 \not \equiv 0 \bmod \tilde{p}_{i_a}$. If $\tilde{x}_{i_a} = x_{i_a}$, then $\tilde{p}_{i_a} = p_{i_a}$ and
$1 = \sigma(x_{i_a}) =  \sigma(\tilde{x}_{i_a})$. On the other hand,
if $\tilde{x}_{i_a} = \bar{x}_{i_a}$, then $\tilde{p}_{i_a} = \bar{p}_{i_a}$ and $z_2 \equiv 0 \bmod p_{i_a}$ by Claim~\ref{claimx2not01pibpi}. We obtain
$1 = 1 - \sigma(x_{i_a}) = \sigma(\bar{x}_{i_a}) = \sigma(\tilde{x}_{i_a})$.
Hence, $\sigma(\tilde{x}_{i_a}) = 1$ in both cases.

Vice versa suppose that there is a truth assignment $\sigma : X \to \{0,1\}$ such that
every clause in $\mathcal{C}$ contains a literal that is mapped to $1$ by $\sigma$.
We define $z_1 = 1$ and $z_2 \in \mathbb{N}$ as the smallest positive integer satisfying the congruences
\begin{equation} \label{eq-z2}
z_2 \equiv \sigma(x_i) \bmod p_i \quad \text{ and } \quad 
z_2 \equiv 1-\sigma(x_i) \bmod \bar{p}_i
\end{equation}
for all $i \in [n]$. Then $\pi_1^{z_1}\pi_2^{z_2} \in \fpf(n)$ follows from the following points, where 
$i \in [n]$, $l \in [p_i-1]$, $k \in [\bar{p}_i-1]$, and $j \in [m]$ are arbitrary:
\begin{compactitem}
\item
$\alpha_{i,1}^{z_1}\gamma_{i,1}^{z_2}= \llbracket p_i \rrbracket$,
$\alpha_{i,2}^{z_1}\gamma_{i,2}^{z_2}= \llbracket \bar{p}_i \rrbracket$ and
$\alpha_{i,3}^{z_1}\gamma_{i,3}^{z_2}= \gamma_{i,3}^{z_2} = \llbracket p_i\bar{p}_i \rrbracket^{z_2}$
are fixpoint-free.
\item $\alpha_{i,l,k}^{z_1}\gamma_{i,l,k}^{z_2} = \llbracket p_i\bar{p}_i \rrbracket^{s_{i,l,k}+z_2}$ is fixpoint-free
since $s_{i,l,k}+z_2\equiv l \not \equiv 0 \bmod p_i$ if $\sigma(x_i)=0$ and 
$s_{i,l,k}+z_2\equiv k \not \equiv 0 \bmod \bar{p}_i$ if $\sigma(x_i)=1$.
\item $\beta_j^{z_1}\delta_j^{z_2} = \delta_j^{z_2} = \llbracket r_j \rrbracket^{z_2}$ is fixpoint-free. To see this let
$C_j = \{\tilde{x}_{i_1},\tilde{x}_{i_2},\tilde{x}_{i_3}\}$ and $a \in [3]$ be such that $\sigma(\tilde{x}_{i_a}) = 1$. Then
\eqref{eq-z2} yields $z_2 \equiv \sigma(\tilde{x}_{i_a}) \equiv 1 \bmod \tilde{p}_{i_a}$ and hence
$z_2 \not \equiv 0 \bmod r_j$. \qed
\end{compactitem}
\end{proof}

\begin{corollary}
It is {\sf NP}-complete to check whether 
$\pi_1 \langle \pi_2 \rangle \cap \fpf(n) \neq \emptyset$ holds
for given $\pi_1,\pi_2 \in \Sym(n)$ with $\pi_1 \pi_2 = \pi_2 \pi_1$.
\end{corollary}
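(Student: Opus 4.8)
The plan is to reuse, essentially verbatim, the reduction constructed in the proof of Theorem~\ref{theoremfpfnpcomplete}, exactly as Corollary~\ref{coro-cosets} was obtained from Theorem~\ref{theoremcycletypenpcomplete}. First I would dispose of membership in {\sf NP}: on input $\pi_1,\pi_2 \in \Sym(n)$ with $\pi_1\pi_2 = \pi_2\pi_1$ one guesses a permutation $\pi \in \Sym(n)$ and checks in polynomial time that $\pi_1^{-1}\pi \in \langle \pi_2 \rangle$ (membership in a cyclic, hence in any, permutation group lies in {\sf P} by \cite{BaLuSe87}) and that $\pi \in \fpf(n)$; together these certify $\pi \in \pi_1\langle\pi_2\rangle \cap \fpf(n)$. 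I would also record the elementary fact that $\pi_1\langle\pi_2\rangle = \{\pi_1\pi_2^{z} \mid z \in \mathbb{N}\}$ because $\pi_2$ has finite order, so the coset is enumerated by letting an exponent range over $\mathbb{N}$.

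For {\sf NP}-hardness I would start from the instance $\pi_1,\pi_2 \in \Sym(N)$ that the proof of Theorem~\ref{theoremfpfnpcomplete} associates to a \textsf{3-SAT} formula $\mathcal{C}$; recall that $\pi_1$ and $\pi_2$ commute and that the construction is logspace computable. The key observation is that this particular instance already satisfies
\[
\bigl(\exists\, z_1, z_2 \in \mathbb{N} : \pi_1^{z_1}\pi_2^{z_2} \in \fpf(N)\bigr) \iff \bigl(\exists\, z_2 \in \mathbb{N} : \pi_1\pi_2^{z_2} \in \fpf(N)\bigr),
\]
so that $\pi_1\langle\pi_2\rangle \cap \fpf(N) \neq \emptyset$ if and only if $\mathcal{C}$ is satisfiable. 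The implication from right to left is trivial (take $z_1 = 1$). For the converse I would appeal to the forward direction of the proof of Theorem~\ref{theoremfpfnpcomplete}, which from an arbitrary solution $(z_1,z_2)$ builds a satisfying assignment $\sigma$ defined purely in terms of $z_2$ (namely $\sigma(x_i) = 1$ iff $z_2 \not\equiv 0 \bmod p_i$), together with the ``vice versa'' direction, which conversely produces from a satisfying assignment a solution in which $z_1 = 1$. Chaining these two facts yields the claimed equivalence, and hence the map $\mathcal{C} \mapsto (\pi_1,\pi_2)$ is the desired logspace reduction.

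I do not anticipate any genuine difficulty here: all the combinatorial work is already contained in Theorem~\ref{theoremfpfnpcomplete}, and what remains is bookkeeping — checking that the truth assignment extracted in its forward direction does not depend on $z_1$, and that its backward direction indeed uses $z_1 = 1$, both of which are immediate from the written argument. The only mildly delicate point is to state cleanly, for the {\sf NP} upper bound, that membership of a permutation in the cyclic group $\langle\pi_2\rangle$ is decidable in polynomial time, which is standard.
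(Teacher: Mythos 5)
Your proposal is correct and follows essentially the same route as the paper: the paper's own proof of this corollary is a one-line observation that for the $\pi_1,\pi_2$ constructed in Theorem~\ref{theoremfpfnpcomplete}, existence of $z_1,z_2$ with $\pi_1^{z_1}\pi_2^{z_2}\in\fpf(n)$ is equivalent to existence of $z$ with $\pi_1\pi_2^{z}\in\fpf(n)$, which is exactly the chaining argument (forward direction depends only on $z_2$, backward direction uses $z_1=1$) that you spell out. Your additional remarks on the {\sf NP} upper bound match the paper's general remark at the end of Section~\ref{sec-perm}.
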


\begin{proof}
For $\pi_1$ and $\pi_2$ from the proof of Theorem~\ref{theoremfpfnpcomplete},
there are $z_1, z_2 \in \mathbb{N}$ with $\pi_1^{z_1}\pi_2^{z_2} \in \fpf(n)$ if and only if there is $z \in \mathbb{N}$ with
$\pi_1\pi_2^{z}\in \fpf(n)$.
\qed
\end{proof}


\end{document}